\documentclass[iccmp]{ipbook}

\makeatletter
\AtBeginDocument{\let\publname\@empty\let\@serieslogo\@empty}
\makeatother

\startlocaldefs

\usepackage{amssymb,amsfonts}

\usepackage{mathrsfs}
\usepackage{bbm}
\usepackage{hyperref}
\hypersetup{
    colorlinks,
    linktocpage,
    linkcolor={blue},
    citecolor={blue}
}

\numberwithin{equation}{section}

\newtheorem{theoremcounter}{theoremcounter}[section]

\theoremstyle{plain}
\newtheorem{theorem}[theoremcounter]{Theorem}
\newtheorem{lemma}[theoremcounter]{Lemma}
\newtheorem{proposition}[theoremcounter]{Proposition}
\newtheorem{corollary}[theoremcounter]{Corollary}
\newtheorem{example}[theoremcounter]{Example}

\theoremstyle{definition}
\newtheorem{definition}[theoremcounter]{Definition}

\theoremstyle{remark}
\newtheorem{remark}[theoremcounter]{Remark}

\newcommand{\Cs}{\mathrm{C}^*}
\newcommand{\Csr}{\mathrm{C}^*_{\mathrm{red}}}

\newcommand{\bC}{\mathbb C}

\newcommand{\bZ}{\mathbb Z}

\newcommand{\ra}{\rightarrow}

\endlocaldefs

\firstpage{1}
\lastpage{1}

\title[]{Higher Kazhdan projections for one-relator groups}
\author[]{Sanaz Pooya}
\address{Institute of Mathematics, 
		University of Potsdam, 
		14476 Potsdam, Germany}
\email{sanaz.pooya@uni-potsdam.de}\thanks{}
\author[]{Baiying Ren}
\address{Research Center of Operator Algebras, East China Normal University, Shanghai 200241, China; Currently: Mathematisches Institut, Universit{\"a}t G{\"o}ttingen, G{\"o}ttingen 37073, Germany}
\email{52275500020@stu.ecnu.edu.cn}
\author[]{Hang Wang}
\address{Research Center of Operator Algebras, East China Normal University, Shanghai 200241, China}
\email{wanghang@math.ecnu.edu.cn}

\begin{document}

\begin{abstract}
We study the $K$-theory classes of higher Kazhdan projections of a discrete group under a spectral-gap assumption and recall how they are obtained, via the Baum--Connes assembly map, from equivariant Euler classes.  After reviewing the case of virtually free groups, where the universal space for proper actions has a one-dimensional model, we prove the main new result: an explicit description of the $K$-theory class of the first higher Kazhdan projection for finitely generated infinite one-relator groups whose first Laplacian has a spectral gap.  If moreover the one-relator group is hyperbolic, this also gives corresponding formulas for delocalised $\ell^2$-Betti numbers.
\end{abstract}

\maketitle


	
\section{Introduction}
\label{sec:introduction}

The Euler characteristic is a fundamental invariant in topology.
For a closed manifold $M$, it is defined homologically as the alternating sum of Betti numbers,
\[
\chi(M)=\sum_i (-1)^i \dim H_i(M).
\]
If $M$ admits a finite triangulation, the same invariant is computed combinatorially as the alternating sum of the numbers of simplices in each dimension. Hodge theory provides a third, analytic description. For a closed Riemannian manifold $M$, the de Rham operator
\[
d+d^*:\Omega^{\mathrm{even}}(M)\longrightarrow
\Omega^{\mathrm{odd}}(M)
\]
is Fredholm, and its index equals $\chi(M)$. Equivalently, $\chi(M)$ is the alternating sum of the dimensions of spaces of harmonic forms, arising as the kernels of the Laplacians on differential forms.

In the 1970s, Atiyah formulated the theory of $K$-homology, the dual of $K$-theory, using equivalence classes of elliptic operators~\cite{Atiyah1}. Noting that de Rham operators form a distinguished class of elliptic operators, one may study the Euler characteristic using its $K$-homology class, which provides greater flexibility in working with operators. See \cite{Rosenberg99}. This point of view naturally extends to the equivariant setting and lands in the framework of operator
algebras \cite{Lueck-Rosenberg,EM}. Given a discrete group $G$ of type $F_n$ for some $n$, 
and a unitary representation $\mathcal H$ of $G$, the role of harmonic forms is played by
the kernels of higher degree Laplacians associated with free resolutions of the
trivial $\bZ G$-module $\mathbb Z$, with coefficients in $\mathcal H$ \cite{linowakpooya2020}. More precisely, let $\Delta_n$ denote the
Laplacian in degree $n$ defined by the differential of the cochain complex associated with the free resolution. When $\Delta_n$ has a spectral gap at zero, the
orthogonal projection onto its kernel defines the higher Kazhdan projection
$p_n$.  
The spectral gap assumption
is essential, since it guarantees that the projection obtained as the limit
of heat operators
\[
p_n=\lim_{t\to\infty}e^{-t\Delta_n}
\]
belongs to the relevant group $C^*$-algebra.  
Criteria for the existence of
these spectral gaps in terms of reduced group cohomology are given in
\cite{Bader-Nowak}, and their relation with Novikov--Shubin invariants is
described in \cite{LueckL2inv}.
Higher Kazhdan projections were also introduced and studied in connection with $\ell^2$-Betti numbers and the (coarse)
Baum--Connes conjecture \cite{linowakpooya2020}. 

In this paper, we focus on the left regular representation and hence on
higher Kazhdan projections in $K$-theory 
\[
[p_n]\in K_0(C^*_r(G)).
\]
These classes provide a noncommutative analogue of the contribution of
harmonic forms to the Euler characteristic.  Pairing them with traces gives
numerical invariants.  In particular, using delocalised traces
$\tau_{\langle g\rangle}$ on suitable smooth subalgebras of $C^*_r(G)$
(see \cite{puschnigg2010}), one obtains the delocalised
$\ell^2$-Betti numbers
\[
\beta^{(2)}_{n,\langle g\rangle}(G)
=
\tau_{\langle g\rangle}([p_n]),
\]
introduced in this setting in \cite{Pooya-Wang}.  Thus, higher Kazhdan
projections provide an operator-algebraic realisation of $L^2$-cohomological
information.

The purpose of this paper is to understand the $K$-theory classes of higher
Kazhdan projections more explicitly.  Although these projections are defined
analytically as spectral projections of higher Laplacians, their classes in
$K_0(C^*_r(G))$ are generally difficult to compute directly \cite{Pooya-Wang}.  Our approach is
to identify these analytic classes through equivariant Euler
characteristics.  More precisely, we relate the combinatorial equivariant
Euler class introduced by Emerson and Meyer \cite{EM} to the higher Kazhdan
projection classes under the Baum--Connes assembly map.
See Theorem \ref{main thm1} (also the main result in \cite{Pooya-Ren-Wang}).  Let $G$ be a discrete group
admitting a $G$-finite model for the universal proper $G$-space
$\underline EG$.  Assume that there is only one non-vanishing higher Kazhdan
projection $p_n$.  Then the Baum--Connes assembly map $\mu^G$ sends the combinatorial
equivariant Euler class $[\mathrm{Eul}^{\mathrm{cmb}}]$ to the class of the higher Kazhdan projection:
\[
\mu^G([\mathrm{Eul}^{\mathrm{cmb}}])
=
(-1)^n[p_n].
\]
Moreover, the class of $p_n$ admits the explicit description
\[
(-1)^n[p_n]
=
\sum_{\sigma\in G\backslash SX}
(-1)^{|\sigma|}[\rho_\sigma],
\]
where $\rho_\sigma$ is the averaging projection associated with the stabiliser
subgroup of the simplex $\sigma$.  More generally, when several higher
Kazhdan projections are non-zero, the same argument identifies the Euler class
with the alternating sum of the $K$-theory classes of higher Kazhdan projections.

We first apply this result to non-amenable finitely generated virtually free
groups \cite{Pooya-Ren-Wang}.  Such groups admit Bass--Serre trees as cocompact models for
$\underline EG$.  In this case the only non-vanishing higher Kazhdan
projection is the one in degree one; see Lemma \ref{lem:existence}.  Theorem
\ref{main thm1} therefore gives a concrete formula for its $K$-theory class in
terms of the averaging projections associated with the vertex and edge
stabilisers of the Bass--Serre tree, as stated in Corollary
\ref{cor: tree}.  This formula leads to explicit computations for amalgamated
free products and HNN extensions, see Corollaries \ref{cor: amalgam} and
\ref{cor: HNN}.  In particular, for
\[
\mathrm{SL}(2,\mathbb Z)=\mathbb Z_4*_{\mathbb Z_2}\mathbb Z_6,
\]
we obtain an explicit expression for $[p_1]$ in terms of averaging projections
of finite subgroups (Example \ref{ex: sl2r}).  Applying delocalised traces then
gives non-vanishing results for delocalised $\ell^2$-Betti numbers, including
the examples described in Corollary \ref{cor: nonzero del betti}.

We then study finitely generated infinite one-relator groups
\[
G=\langle X\mid r\rangle,
\]
where $X$ is a finite set of generators and $r$ is a single relator. This class includes important examples such as fundamental groups of closed surfaces.
The $\ell^2$-Betti numbers of such groups are concentrated in degree one by
the result of Dicks and Linnell \cite{Warren-Linnell}.  However, unlike the
virtually free case, the existence of the first higher Kazhdan projection
depends on the spectral gap of the first Laplacian, which is a more subtle
condition.  Assuming that this spectral gap exists, Theorem
\ref{theorem: one relator groups} gives the formula
\[
[p_1]=( |X|-1 )[1]-[e]\in K_0(C^*_r(G)),
\]
where $e$ is the averaging projection associated with the finite cyclic
subgroup determined by the proper-power decomposition of the relator $r$; see
also \cite{Chiswell04}.  When the group is hyperbolic, this description
immediately gives explicit formulas for delocalised $\ell^2$-Betti numbers
through Corollary \ref{corollary: l2 Betti numbers for one relator groups}.
We illustrate this result with closed hyperbolic surface groups, the groups
$\mathbb Z_n*\mathbb Z$, and further examples where the spectral gap
condition fails, including $\mathbb Z^2*\mathbb Z$.  We also discuss
Baumslag--Solitar groups, where all higher Kazhdan projections vanish.

The paper is organised as follows.  In Section \ref{sec: Kazhdan
projections} we recall higher Kazhdan projections, spectral gap criteria, and
delocalised $\ell^2$-Betti numbers.  Section \ref{sec: Main theorem} discusses
the relation between the combinatorial Euler characteristic and higher
Kazhdan projection under the Baum--Connes assembly map.
Section~\ref{sec:4} and Section~\ref{sec:5} focus on the applications to virtually free groups and one-relator groups, respectively.

    \section*{Acknowledgements}
HW is supported
by the grants 23JC1401900 and NSFC 12271165, and in part by the Science and Technology Commission of
Shanghai Municipality (No. 22DZ2229014).
BR is supported by NSFC grant 125B2009 and a scholarship from the China Scholarship Council (No. 202506140038).
The authors would like to thank Thomas Schick for very helpful suggestions on the Novikov--Shubin invariants of one-relator groups.

    \section{Higher Kazhdan projections and delocalised \texorpdfstring{$\ell^2$}{l2}-Betti numbers}
\label{sec: Kazhdan projections} 

In this section, we recall higher Kazhdan projections in a generalized setting as introduced in \cite{Pooya-Ren-Wang}, and the spectral-gap condition ensuring that they belong to the relevant group $C^*$-algebra. We review criteria for spectral gaps in terms of reduced cohomology and Novikov--Shubin invariants, and then recall delocalised traces and the associated delocalised $\ell^2$-Betti numbers.

   \begin{definition}\cite[Definition 2.1]{Pooya-Ren-Wang} \label{def: higher kazhdan gen}
    Let $G$ be a group of type $FP_{n+1}$ over $\mathbb C$. Let $\mathcal F $ be a family of unitary representations of $G$. A higher Kazhdan projection $p_n$ in degree $n$ is a projection in (a corner of) 
    $\mathrm{M}_{k_n}(\Cs_ {\mathcal F}(G))$ for some $k_n \in \mathbb N$ whose image under any unitary representation $(\pi, H) \in \mathcal F$ is the orthogonal projection 
    \[\pi(p_n) \colon C^n(G, H) \ra \ker \pi(\Delta_n).
    \]
\end{definition}

For each $(\pi,H)\in\mathcal F$, the higher Kazhdan projection satisfies
$\pi(p_n)=\lim_{t\to\infty}e^{-t\pi(\Delta_n)}$ in the strong operator topology. If the family $\{\pi(\Delta_n):(\pi,H)\in\mathcal F\}$ has a uniform spectral gap at zero, these heat operators converge in the norm defining $\Cs_{\mathcal F}(G)$ and yield
$p_n\in M_{k_n}(\Cs_{\mathcal F}(G))$.
A characterisation of the existence of a spectral gap for the higher Laplace operator $\Delta_n$ in terms of reduced group cohomology is given in \cite{Bader-Nowak}. This will be our first tool for verifying the existence of spectral gaps.
\begin{proposition} \label{prop: spectral gap tool}
\cite[Proposition 16]{Bader-Nowak}
    Let $(\pi, H)$ be a unitary representation of $G$. The higher Laplace operator $\pi(\Delta_n)$ has a spectral gap in $\mathrm{M}_{k_n}(\Cs_{\pi}(G))$ if and only if the group cohomology $\mathrm H^n(G, H)$ and $\mathrm H^{n+1}(G, H)$ are both reduced. 
\end{proposition}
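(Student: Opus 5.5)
The statement is a Hilbert-space fact about a three-term cochain complex. Fix a free resolution of type $FP_{n+1}$ and apply $\mathrm{Hom}_{\bC G}(-,H)$. In degrees $n-1,n,n+1$ this gives
\[
C^{n-1}(G,H)\xrightarrow{d_{n-1}} C^n(G,H)\xrightarrow{d_n} C^{n+1}(G,H),
\]
where each $C^k(G,H)\cong H^{k_k}$. Each $d_k$ is given by a matrix over $\bC G$, so $\pi(d_k)$ is a bounded operator with matrix entries in $\Cs_\pi(G)$. The Laplacian is $\pi(\Delta_n)=d_{n-1}d_{n-1}^*+d_n^*d_n$. Having a spectral gap in $\mathrm{M}_{k_n}(\Cs_{\pi}(G))$ means that $0$ is isolated in (or absent from) the spectrum of this operator. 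Since the spectrum of an element of a $\Cs$-subalgebra agrees with its spectrum in $B(H^{k_n})$, this is the same as $\pi(\Delta_n)$ having closed range as an operator on $H^{k_n}$. Finally, $\mathrm H^n(G,H)$ is reduced exactly when $\mathrm{im}\, d_{n-1}$ is closed, and $\mathrm H^{n+1}(G,H)$ is reduced exactly when $\mathrm{im}\, d_n$ is closed. So the claim reduces to:
\[
\Delta=dd^*+\delta^*\delta \text{ has closed range} \iff d=d_{n-1} \text{ and } \delta=d_n \text{ both have closed range}, \qquad \text{given } \delta d=0.
\]

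The first step is the Hodge decomposition. Since $\delta d=0$, we have $\overline{\mathrm{im}\, d}\subseteq\ker\delta$, and so
\[
C^n=\overline{\mathrm{im}\, d}\oplus\overline{\mathrm{im}\,\delta^*}\oplus(\ker d^*\cap\ker\delta).
\]
The last summand equals $\ker\Delta$. On $\overline{\mathrm{im}\, d}$ the operator $\Delta$ acts as $dd^*$, because $\delta^*\delta$ vanishes there. On $\overline{\mathrm{im}\,\delta^*}$ it acts as $\delta^*\delta$, because $dd^*$ vanishes there: $\mathrm{im}\,\delta^*\subseteq(\ker\delta)^\perp\subseteq(\mathrm{im}\, d)^\perp=\ker d^*$. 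Both pieces are invariant under $\Delta$, so the spectrum of $\Delta$ away from $0$ is the union of the nonzero spectra of $dd^*$ and of $\delta^*\delta$. Hence $0$ is isolated in $\sigma(\Delta)$ if and only if it is isolated in both $\sigma(dd^*)$ and $\sigma(\delta^*\delta)$. In addition, $0$ is isolated in $\sigma(T^*T)$ if and only if $T$ has closed range: for example, apply the polar decomposition, or use that $T^*T$ is bounded below on $(\ker T)^\perp$ exactly when $T$ is. The same holds for $TT^*$, whose nonzero spectrum coincides with that of $T^*T$. Applying this to $T=d^*$ and $T=\delta$ shows that $\Delta$ has a gap if and only if $d^*$ and $\delta$ have closed range. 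By the closed range theorem, $d^*$ has closed range if and only if $d$ does, which completes the equivalence.

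The main obstacle is the bookkeeping in the first reduction, not the operator theory. One has to check that "spectral gap in $\mathrm{M}_{k_n}(\Cs_\pi(G))$" matches the Hilbert-space notion, which holds by spectral permanence in $\Cs$-algebras. One also has to check that reducedness of $\mathrm H^{n+1}$ involves only $d_n$, which is true because reducedness concerns closedness of the image of the incoming differential. The endpoint case $n=0$ needs a separate remark: there $C^{-1}=0$, so $d=0$, and the statement only concerns $\mathrm H^0$ (automatically reduced) and $\mathrm H^1$, which is Kazhdan's characterisation of property (T). The same argument covers this case.
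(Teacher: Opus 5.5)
Your argument is correct and is essentially the standard proof of the cited result of Bader--Nowak; the paper gives no proof of its own, only the citation. The steps match that proof: spectral permanence reduces the gap to closed range of $\pi(\Delta_n)$, and the Hodge decomposition $C^n=\overline{\mathrm{im}\, d_{n-1}}\oplus\overline{\mathrm{im}\, d_n^*}\oplus\ker\pi(\Delta_n)$ splits $\pi(\Delta_n)$ into $d_{n-1}d_{n-1}^*$ and $d_n^*d_n$, each of which has a gap exactly when the corresponding differential has closed range (one small quibble: for a single representation the $n=0$ case is Guichardet's criterion rather than Kazhdan's characterisation of property (T), but your general argument covers it anyway).
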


In this article, we work with the left regular representation $(\lambda, \ell^2(G))$.
In this case, the reducedness of the first group cohomology is characterised as follows.

\begin{lemma}[{\normalfont \cite[Corollary III.2.4]{Guichardet}; see also page 257 of \cite{LueckL2inv}}]
\label{lemma: the first group cohomology}
    Let $G$ be a discrete group.
    Then $\mathrm H^1(G, \ell^2(G))$ is reduced if and only if $G$ is non-amenable. 
\end{lemma}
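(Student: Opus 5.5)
The plan is to identify $\mathrm H^1(G,\ell^2(G))$ with the cokernel of a concrete bounded operator and reduce both directions to the dichotomy between amenability and a spectral gap for the Markov operator. For a finitely generated $G$ with finite symmetric generating set $S$, I would use the standard cochain description. The coboundary is
\[
d\colon \ell^2(G)\to \bigoplus_{s\in S}\ell^2(G),\qquad (d\xi)_s=\lambda(s)\xi-\xi .
\]
Then $\mathrm H^1(G,\ell^2(G))$ is reduced exactly when the space of $1$-coboundaries $B^1=\operatorname{im} d$ is closed in $Z^1$. By the closed range theorem, $\operatorname{im} d$ is closed if and only if $d$ is bounded below on $(\ker d)^\perp$.

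The first step is to note that $\ker d=\ell^2(G)^G$. This space is $0$ when $G$ is infinite, since an invariant $\ell^2$ function is constant. So for infinite $G$, closed range amounts to an inequality
\[
\textstyle\sum_{s}\|\lambda(s)\xi-\xi\|^2\ge c\|\xi\|^2 .
\]
This inequality is precisely the statement that $\lambda$ has no almost invariant vectors, equivalently that $\Delta_0=d^*d$ has a spectral gap at $0$. By Kesten/Hulanicki (Følner's criterion), this holds if and only if $G$ is non-amenable. This settles the finitely generated infinite case in both directions.

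Next come the degenerate cases. If $G$ is finite, it is amenable, and cohomology with coefficients in a representation of a finite group is $0$, hence reduced. This shows the statement needs the standing assumption that $G$ is infinite (as in Guichardet's formulation), and I would make that implicit hypothesis explicit. For non-finitely generated $G$, I would either restrict to the type $FP$ setting used throughout the paper or invoke Guichardet's general argument: there $B^1$ closed is equivalent to $\lambda$ not weakly containing the trivial representation, which again is Hulanicki's criterion. I will simply cite \cite{Guichardet} for that generality.

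The main obstacle is the translation from ``reduced'' to ``$d$ has closed range''. Here one must check that the topology on $Z^1$ used for reducedness matches the norm topology on $\bigoplus_S\ell^2(G)$. This holds because a cocycle is determined by its values on $S$, and restriction to $S$ is a topological embedding of $Z^1(G,\ell^2(G))$ with the pointwise-convergence topology. Once that identification is in place, the rest is the closed range theorem combined with Kesten's amenability criterion.
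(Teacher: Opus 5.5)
Your argument is correct and is the standard proof behind the references the paper cites; the paper itself gives no argument beyond the citation. Guichardet's criterion says that for a $\sigma$-compact group $G$ and a unitary representation $\pi$ without invariant vectors, $B^1(G,\pi)$ is closed iff $\pi$ has no almost invariant vectors. It is proved with the open mapping theorem on the Fr\'echet space $Z^1(G,\pi)$, followed by Hulanicki--Reiter for $\lambda$. For finitely generated $G$ you replace the open mapping theorem by the closed range theorem for the bounded operator $d\colon\ell^2(G)\to\bigoplus_S\ell^2(G)$, after correctly identifying the pointwise topology on $Z^1$ with the norm topology via restriction to $S$. This is exactly the Banach-space cochain setting (Bader--Nowak) that the paper works in. It also covers the only place the lemma is applied, the finitely generated group $\bZ_n*\bZ$.

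Your correction of the statement is also right. Finite groups are amenable and have $\mathrm H^1(G,\ell^2(G))=0$, so the lemma should read ``reduced iff $G$ is finite or non-amenable'', consistent with Lemma~\ref{lemma: NS invariant}.

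One caution about the case you defer to Guichardet: his argument needs $G$ countable. The implication non-amenable $\Rightarrow$ reduced holds for every discrete group. It follows from your own bounded-below argument, using a finite set $K$ witnessing non-amenability in place of $S$ and nets in place of sequences. The converse, however, can fail for uncountable groups. By Cornulier's theorem, infinite powers of finite perfect groups such as $A_5^{\mathbb N}$ are strongly bounded. They therefore have property (FH), so $\mathrm H^1(G,\ell^2(G))=0$, yet they are locally finite and hence amenable. So ``that generality'' should be ``countable discrete groups''.
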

Another criterion for the existence of spectral gaps is provided by the Novikov--Shubin invariants.
See \cite{LueckL2inv} for a comprehensive introduction.
Novikov--Shubin invariants $\alpha_p(M)$ measure the behaviour of the spectrum of the analytic Laplace operators $\Delta_p$ around zero for a cocompact, free, proper $G$-manifold $M$ with a $G$-invariant Riemannian metric.
Furthermore, as introduced in \cite{LueckL2inv}, $\alpha_p(M)$ also has an equivalent combinatorial definition when $M$ admits a free $G$-CW-complex model of finite type.
For a discrete group $G$ that is finitely presented and of type $FP_{\infty}$, Theorem 11.38 (ii) of \cite{Lueck2025} provides a CW-complex model $X$ for $EG$ of finite type.
In this setting, the Novikov--Shubin invariants for $X$ precisely characterise the spectrum around zero of the Laplace operators with coefficients in $\ell^2(G)$, as needed.


\begin{proposition}\cite[Lemma 2.66 (2) and Theorem 3.136 (3)]{LueckL2inv}
\label{prop: spectral gap and NS inv}
    Let $G$ be a discrete group and $X$ be a connected free $G$-CW-complex of finite type.
    Then the Laplace operator $\Delta_p$ has a spectral gap at 0 if and only if $\alpha_p(X)=\alpha_{p+1}(X)={\infty}^+$; equivalently, the unreduced and reduced $L^2$-de Rham cohomology agree in dimensions $p$ and $p+1$.
\end{proposition}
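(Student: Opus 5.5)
The plan is to reduce the statement to a fact about a single chain complex of Hilbert $G$-modules with finitely many summands in each degree. Using the finite type model $X$, form the cellular $\ell^2$-cochain complex
\[
0\ra C^0_{(2)}(X)\xrightarrow{d^0} C^1_{(2)}(X)\xrightarrow{d^1} C^2_{(2)}(X)\ra\cdots,
\]
with $C^p_{(2)}(X)=\ell^2(G)^{k_p}$. Here $k_p$ is the number of $G$-orbits of $p$-cells, and each $d^p$ is given by right multiplication by a matrix over $\bZ G$. The combinatorial Laplacian is $\Delta_p=(d^p)^*d^p+d^{p-1}(d^{p-1})^*$, and by definition $\alpha_p(X)$ is built from the spectral density functions of $d^{p-1}$ restricted to the orthogonal complement of its kernel. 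Since the complex is of finite type, $\Delta_p$ is a bounded operator in $\mathrm{M}_{k_p}(\Csr(G))$, a positive element. It has a spectral gap at $0$ exactly when $0$ is isolated in its spectrum.

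Step one is the orthogonal decomposition
\[
\ell^2(G)^{k_p}=\ker\Delta_p\oplus\overline{\mathrm{im}\,d^{p-1}}\oplus\overline{\mathrm{im}\,(d^p)^*}.
\]
On the second summand $\Delta_p$ acts as $d^{p-1}(d^{p-1})^*$, and on the third as $(d^p)^*d^p$. Therefore $\Delta_p$ has a spectral gap if and only if both of these positive operators are bounded below on the complements of their kernels. Equivalently, $\Delta_p$ has a gap if and only if $d^{p-1}$ and $d^p$ both have closed range.

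Step two links closed range to Novikov--Shubin invariants. For an operator $f$ between finitely generated Hilbert $G$-modules, consider the spectral density $F(f)(\lambda)=\dim_{G}$ of the spectral projection of $f^*f$ onto $[0,\lambda^2]$. The number $\alpha(f)=\infty^+$ means precisely that $F(f)(\lambda)=F(f)(0)$ for some $\lambda>0$. Faithfulness of the von Neumann dimension then shows that this happens iff $f^*f$ has a gap above $0$ on $(\ker f)^\perp$, which holds iff $f$ has closed range; this is Lemma 2.66 of \cite{LueckL2inv}. Applying this to $f=d^{p-1}$ gives $\alpha_p(X)$, and to $f=d^p$ gives $\alpha_{p+1}(X)$. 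Combined with step one, this yields the first equivalence.

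Step three handles the reformulation. Reduced and unreduced $L^2$-cohomology agree in degree $p$ iff $\mathrm{im}\,d^{p-1}$ is closed. Agreement in degrees $p$ and $p+1$ therefore amounts to closedness of $\mathrm{im}\,d^{p-1}$ and $\mathrm{im}\,d^p$, which is exactly the condition from step one. The cellular (combinatorial) de Rham comparison of Theorem 3.136 in \cite{LueckL2inv} identifies these groups with the $L^2$-de Rham cohomology in the manifold setting. The only genuinely delicate point is step two: the implication that an $\infty^+$ invariant forces a gap uses normality of the trace to pass from dimension zero of the spectral projection onto $(0,\lambda^2]$ to vanishing of that projection. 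Everything else is Hilbert-space bookkeeping.
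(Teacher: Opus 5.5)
Your argument is correct, and it takes a different route from the paper only in the sense that the paper gives no argument at all: it quotes Lemma 2.66(2) and Theorem 3.136(3) of \cite{LueckL2inv} and so relies on L\"uck's general theory of spectral density functions and Novikov--Shubin invariants of Hilbert chain complexes. You instead unpack this into a self-contained Hilbert-space argument. The Hodge decomposition reduces the gap of $\Delta_p$ to closed range of $d^{p-1}$ and $d^p$. Faithfulness of $\dim_G$ then identifies closed range of a morphism with its Novikov--Shubin invariant being $\infty^+$. Your route makes clear that only the qualitative dichotomy ``$\infty^+$ or not'' matters, and it works verbatim for any Hilbert cochain complex of finite type. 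The citation route places the same facts inside L\"uck's framework, which also handles quantitative relations between the invariants and the analytic (manifold) setting, but it does not show why the equivalence holds.

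A few points to tidy up:
\begin{itemize}
\item L\"uck defines $\alpha_p(X)$ through the chain differential $c_p$, and your $d^{p-1}$ is its adjoint. So instead of ``by definition'' you should note that $\alpha(f)=\alpha(f^*)$, or simply that closed range passes to adjoints.
\item The step from $\dim_G E_{(0,\lambda^2]}=0$ to $E_{(0,\lambda^2]}=0$ uses faithfulness of the trace, not normality as your final sentence says. Your step two had this right.
\item A spectral gap should also allow the case $0\notin\operatorname{spec}(\Delta_p)$.
\item For a general free $G$-CW-complex, ``$L^2$-de Rham cohomology'' can only mean cellular $L^2$-cohomology, and your step three already covers that. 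If you insist on the manifold reading, you need equality of analytic and cellular Novikov--Shubin invariants, or equivalently a comparison of \emph{unreduced} cohomologies. The usual $L^2$-de Rham isomorphism, which concerns reduced cohomology, does not cover this.
\end{itemize}
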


\begin{lemma}\cite[Theorem 2.55 (5)]{LueckL2inv}
\label{lemma: NS invariant}
    Let $G$ be a discrete group and $X$ be a connected free $G$-CW-complex of finite type.
    Then $\alpha_1(X)={\infty}^+$ if and only if $G$ is finite or non-amenable. 
\end{lemma}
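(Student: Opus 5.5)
The plan is to reduce the condition $\alpha_1(X)={\infty}^+$ to a statement about the combinatorial Laplacian in degree zero on the $1$-skeleton of $X$. Then we apply Kesten's criterion, in its isoperimetric form, relating spectral gaps of graph Laplacians to amenability.

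First, recall that $\alpha_1(X)$ is defined through the spectral density function of the first cellular $\ell^2$-differential $c_1\colon C_1^{(2)}(X)\to C_0^{(2)}(X)$. By definition, $\alpha_1(X)={\infty}^+$ means that this spectral density function is constant near $0$. Equivalently, $c_1^*c_1$ restricted to $(\ker c_1)^\perp$ is bounded below. This is in turn equivalent to $c_1$ having closed range, and to $\Delta_0=c_1c_1^*$ having a spectral gap at $0$; this is the $p=0$ instance of the dictionary in Proposition~\ref{prop: spectral gap and NS inv}. Since $X$ is of finite type, $C_0^{(2)}(X)\cong\ell^2(G)^{k_0}$ and $C_1^{(2)}(X)\cong\ell^2(G)^{k_1}$, where $k_0$ and $k_1$ are the numbers of $G$-orbits of $0$- and $1$-cells. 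So $\Delta_0$ is the graph Laplacian $\deg-A$ of the locally finite graph $\Gamma=X^{(1)}$, on which $G$ acts freely and cocompactly. Connectedness of $X$ gives connectedness of $\Gamma$.

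If $G$ is finite, every $C_p^{(2)}(X)$ is finite-dimensional, so every operator has finite spectrum and $\alpha_1(X)={\infty}^+$ trivially. If $G$ is infinite, $\Gamma$ is an infinite connected graph of bounded degree, so $\ker\Delta_0=0$ (no nonzero $\ell^2$ harmonic functions that are locally constant). Hence a spectral gap at $0$ means $\Delta_0\ge\varepsilon>0$. By the Dodziuk--Kesten isoperimetric inequality, this holds if and only if $\Gamma$ has positive Cheeger constant, i.e.\ satisfies a linear isoperimetric inequality $|\partial F|\ge c|F|$ for finite vertex sets $F$.

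It remains to show that $\Gamma$ is non-amenable as a graph if and only if $G$ is non-amenable. Since $G$ acts freely and cocompactly on the connected graph $\Gamma$, the orbit map $G\to\Gamma^{(0)}$ is a quasi-isometry from a Cayley graph of $G$ (here $G$ is finitely generated). Existence of Følner sequences is a quasi-isometry invariant of bounded-degree graphs. Concretely, a Følner sequence $F_n\subset G$ gives the Følner sets $F_n\cdot\{v_1,\dots,v_{k_0}\}$ in $\Gamma$, and conversely a Følner set in $\Gamma$ projects to one in $G$ up to uniformly bounded multiplicity. Combined with Følner's characterisation of amenability, this finishes the proof.

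The main obstacle is making the quasi-isometry-invariance step rigorous while controlling boundaries across the finitely many orbits of cells. This is where I would be most careful, or else cite Kesten's theorem for Cayley graphs together with the invariance of $\alpha_1$ under $G$-homotopy equivalence of the $1$-skeleton data.
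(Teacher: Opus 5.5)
Your argument is correct. The paper gives no proof of this lemma; it imports it from L\"uck's book, so your proposal is a self-contained substitute for the citation rather than a different proof of the same argument.

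The standard argument behind that reference passes to a Cayley graph, where $\Delta_0=\sum_{s\in S}(2-s-s^{-1})$, via homotopy invariance of Novikov--Shubin invariants. It then applies Kesten's criterion. In that source, Varopoulos's return-probability estimates supply the finer part of the theorem: the finite values of $\alpha_1$ occur exactly for infinite virtually nilpotent $G$.

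You instead stay on the given $X$. Your chain is: $\alpha_1(X)={\infty}^+$ $\Leftrightarrow$ $c_1$ has closed range $\Leftrightarrow$ (for infinite $G$) the graph Laplacian of $X^{(1)}$ is invertible $\Leftrightarrow$ positive Cheeger constant (Dodziuk) $\Leftrightarrow$ $G$ non-amenable (F\o lner sets plus quasi-isometry invariance). This needs no Novikov--Shubin theory beyond the definition, at the cost of the orbit bookkeeping in the F\o lner transfer. The reference's route is shorter once homotopy invariance is available, and it also identifies the finite values of $\alpha_1$.

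The step you flagged is standard, but mind the side. An edge $he_i$ joining $ha_iv_j$ to $hb_iv_k$ lies in the boundary of $F\cdot\{v_1,\dots,v_{k_0}\}$ if and only if $h\in Fa_i^{-1}\triangle Fb_i^{-1}$. So you need $|Fs\triangle F|=o(|F|)$ for $s=b_i^{-1}a_i$, i.e.\ a right F\o lner sequence, obtained by inverting a left one.

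Loops and multiple edges in $X^{(1)}$ are harmless. Loops drop out of $c_1c_1^*$, and Dodziuk's inequality holds for bounded-degree multigraphs.
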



Higher Kazhdan projections are closely related to delocalised $\ell^2$-Betti numbers, which are obtained by pairing these projections with delocalised traces based on the work by the first author and the third author \cite{Pooya-Wang}.
The delocalised trace $\tau_{\langle{g}\rangle}$ defined on $\ell^1(G)$ is the bounded linear map with the tracial property
\[
\tau_{\langle{g}\rangle} \colon \ell^1(G) \ra \mathbb C, \qquad \tau_{\langle{g}\rangle}(f) = \sum _{h\in \langle{g}\rangle} f(h),
\]
where $\langle{g}\rangle$ denotes the conjugacy class of $g$.

\begin{definition} \cite[Definition 2.11]{Pooya-Wang}
Let $G$ be a discrete group of type $FP_{n+1}$. Assume that there is a smooth subalgebra $\mathcal S \subseteq \Csr(G)$ to which the delocalised trace $\tau_{\langle g\rangle}$ extends. Assume further that the K-theory class $[p_n]$ lies in $\mathrm K_0(\Csr(G))$. The $n$-th delocalised $\ell^2$-Betti number of $G$ is
     \[
        \beta _{n, \langle{g}\rangle}^{(2)} (G) = \tau_{\langle{g}\rangle} ([p_n]).
     \]
     \end{definition}
For example, Puschnigg \cite{puschnigg2010} showed that hyperbolic groups have a smooth subalgebra of $\Csr(G)$ to which delocalised traces extend continuously.

Furthermore, from the operator-algebraic perspective, one may also define $\ell^2$-Betti numbers through the K-theory pairing
\[\beta_n^{(2)} (G) = \tau([p_n]),\] 
where $\tau$ is the von Neumann trace. 
Here $\tau$ picks out the coefficient of the group identity in an $\ell^1(G)$-function. Delocalised $\ell^2$-Betti numbers are therefore in complete analogy with $\ell^2$-Betti numbers.

	\section{Combinatorial Euler characteristics as the preimage}
    \label{sec: Main theorem}

The results in this section are recalled from \cite[Sections 3--5]{Pooya-Ren-Wang}. We retain references to the original sources where appropriate.
In this work, we adopt the definition of the combinatorial Euler characteristic given by Emerson and Meyer \cite{EM}.
We omit the detailed construction of their $KK$-class here. See \cite{EM} for details.
\begin{lemma} [{\normalfont \cite[Lemma 3.3]{Pooya-Ren-Wang}; see also \cite[Section 5]{EM}}]
\label{lemma: Euler}
Let $G$ be a discrete group. Suppose  there exists a cocompact simplicial complex model $X$ for $\underline EG$. Then the image of the combinatorial Euler characteristic $\mathrm{Eul}^{\mathrm{cmb}}_X$ under the Baum–Connes assembly map $\mu^G$ is given by 
 $$
  \mu^G([\mathrm{Eul}^{\mathrm{cmb}}_X])=\sum_{\sigma\in G \backslash SX}{{(-1)}^{|\sigma|}[\rho_{\sigma}]}\in \mathrm{K}_0(\Csr(G)),
 $$
 where $SX$ denotes the set of simplices of $X$,  $|\sigma|$ is the dimension of the simplex $\sigma$, and $\rho_{\sigma}$ is the projection associated with $\sigma$.
\end{lemma}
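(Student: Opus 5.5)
The plan is to describe the Emerson--Meyer class $[\mathrm{Eul}^{\mathrm{cmb}}_X]$ in $KK^G(C_0(X),\bC)$ concretely, and then compute its image under assembly simplex by simplex. Because $X$ is a cocompact simplicial model for $\underline EG$, choose the barycentric subdivision if necessary so that $G$ acts without inversions. Then every simplex stabiliser $G_\sigma$ fixes $\sigma$ pointwise, and it is finite because the action is proper. Emerson and Meyer realise the combinatorial Euler characteristic as the class of the $\bZ/2$-graded $G$-Hilbert space $\bigoplus_{\sigma\in SX}\bC\,\delta_\sigma$. Here $C_0(X)$ acts by evaluating a function at the barycentre of $\sigma$, the grading is $(-1)^{|\sigma|}$, and the operator is zero. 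This gives a Kasparov cycle because the representation is by compact-support multiplication operators and the module is locally finite. The grading yields the sign, so the class splits as a finite alternating sum over $G$-orbits:
\[
[\mathrm{Eul}^{\mathrm{cmb}}_X]=\sum_{\sigma\in G\backslash SX}(-1)^{|\sigma|}\,[\ell^2(G\sigma)],
\]
where $\ell^2(G\sigma)\cong \ell^2(G/G_\sigma)$ carries the evaluation representation at the orbit of barycentres.

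Next we identify each summand as an induced class. The cycle $\ell^2(G/G_\sigma)$ over $C_0(G\cdot b_\sigma)$, pulled back along the inclusion of the orbit of the barycentre $b_\sigma$, is the induction $\mathrm{Ind}_{G_\sigma}^G$ of the trivial one-dimensional $G_\sigma$-representation. This trivial representation is viewed as a class in $KK^{G_\sigma}(C(\mathrm{pt}),\bC)=R(G_\sigma)$. Since the orbit is a discrete closed $G$-invariant subset, the summand factors through the map $G/G_\sigma\to X$.

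We then use naturality of the assembly map under proper maps together with its compatibility with induction from finite subgroups. For a finite subgroup $H$, the assembly map $\mu^H\colon KK^H(\bC,\bC)=R(H)\to K_0(C^*(H))$ is the standard identification. It sends the trivial representation to the class of the averaging projection $\frac1{|H|}\sum_{h\in H}h$. Composing with the map $K_0(C^*(H))\to K_0(\Csr(G))$ induced by the inclusion, we obtain $\mu^G(\mathrm{Ind}_H^G[\mathbf 1])=[\rho_H]$. Applying this with $H=G_\sigma$ gives $\mu^G[\ell^2(G\sigma)]=[\rho_\sigma]$. Summing with signs yields the formula.

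The main obstacle is the compatibility of $\mu^G$ with induction from finite subgroups, i.e.\ identifying $\mu^G(\mathrm{Ind}_H^G x)$ with $i_*\mu^H(x)$. I would prove this by writing assembly as the descent $j^G$ composed with the cutoff projection $c\in C_0(X)\rtimes G$. For the orbit $G/H$ one can take $c=\chi_{eH}$, and then the descended module $\ell^2(G/H)\rtimes G$ compressed by $c$ is exactly the range of $\rho_H$ in $\Csr(G)$. Independence of the choice of cutoff function handles the general case.
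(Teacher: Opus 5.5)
The paper gives no proof of its own for this lemma; it only cites Emerson--Meyer (Section 5) and Pooya--Ren--Wang (Lemma 3.3). Your argument is correct and is essentially the standard one behind that citation: split $(\ell^2(SX),0)$ into its finitely many orbit summands, identify each as the pushforward along $G/G_\sigma\to X$ of $\mathrm{Ind}_{G_\sigma}^G$ of the trivial representation, and use naturality of assembly together with its compatibility with induction from finite subgroups to get $[\rho_\sigma]$.

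Two small fixes are needed. First, drop the barycentric subdivision step: it is unnecessary, because the setwise stabiliser already fixes $b_\sigma$ and acts trivially on $\delta_\sigma$, and subdividing would change $SX$ and hence the statement being proved. Second, normalise the cutoff as $c=|G_\sigma|^{-1/2}\chi_{eG_\sigma}$, so that $p_c=\frac{1}{|G_\sigma|}\sum_{h\in G_\sigma}\chi_{eG_\sigma}u_h$ is actually a projection.
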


The following theorem presents our main result, which gives a concrete description of the K-theory class of the higher Kazhdan projection.
The main idea of the proof is to identify, using several technical results, the $K$-class of the specific alternating sum of averaging projections associated with the image of the combinatorial Euler characteristic in Lemma \ref{lemma: Euler} with the alternating sum of higher Kazhdan projections. See Section 4 of \cite{Pooya-Ren-Wang} for details. 



  


\begin{theorem} \cite[Theorem 4.1]{Pooya-Ren-Wang}
\label{main thm1}
    Let $G$ be a discrete group for which there is a $G$-finite model $X$ for $\underline E G$. 
    Assume that there is only one non-vanishing higher Kazhdan projection $p_n$ associated with $G$, for some $n$. (Hence, the spectral gap for the Laplace operator in degree $n$ exists.)
    Then the $\mathrm{K}$-theory class $[p_n]$ is given explicitly in terms of the averaging projections associated with the simplex orbits:
    \begin{equation}
        \label{p1}
    (-1)^n[p_n] = \sum_{\sigma\in G \backslash SX}(-1)^{|\sigma|} [\rho_{\sigma}] \in \mathrm{K_0}(\Csr(G)).
    \end{equation}
     Moreover, the combinatorial Euler class   
   $
    [\mathrm{Eul}^{\mathrm{cmb}}_X] \in 
    \mathrm{K}^G_0(\underline{E}G)
    $
    satisfies
    \begin{equation}
        \label{p2}
        \mu^G([\mathrm{Eul}^{\mathrm{cmb}}_X]) = (-1)^n[p_n].
    \end{equation}
\end{theorem}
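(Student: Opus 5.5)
Given Lemma \ref{lemma: Euler}, the identity \eqref{p2} follows from \eqref{p1}, so the work is to prove \eqref{p1}. My plan is to pass through a chain complex of finitely generated projective modules over $\Csr(G)$. This complex is built from the simplicial chains of $X$, and its Euler characteristic in $\mathrm{K}_0$ can be computed in two ways.

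\textbf{Step 1: a projective chain complex from $X$.} For each simplex orbit, choose a representative $\sigma$ with finite stabiliser $G_\sigma$. The permutation module $\bC[G/G_\sigma]$ is isomorphic to $\bC G\,\rho_\sigma$, where $\rho_\sigma=\frac1{|G_\sigma|}\sum_{h\in G_\sigma}h$ is the averaging projection. Because $X$ is contractible and its simplex stabilisers are finite, the simplicial chain complex $C_*(X;\bC)$ is a finite resolution of the trivial module $\bC$ by finitely generated projective $\bC G$-modules. I will choose an orientation character where a stabiliser reverses orientation; a barycentric subdivision removes this issue. Tensoring with $\Csr(G)$ gives a finite complex of projective Hilbert $\Csr(G)$-modules
\[
\mathcal E_k=\bigoplus_{|\sigma|=k}\Csr(G)\rho_\sigma .
\]
Its $\mathrm{K}_0$-Euler characteristic is $\sum_\sigma(-1)^{|\sigma|}[\rho_\sigma]$, which is exactly the right-hand side of \eqref{p1}.

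\textbf{Step 2: Hodge decomposition and homotopy invariance.} Form the combinatorial Laplacians $\Delta_k^X=d^*d+dd^*$ on $\mathcal E_*$. The complex $\mathcal E_*$ and the complex $C^*(G,\ell^2G)$ coming from a free resolution of finite type both compute $\mathrm H^*(G;-)$. Indeed, after tensoring over $\bC$, any two projective resolutions of $\bC$ are chain homotopy equivalent. This holds because finite-group cohomology vanishes over $\bC$, which makes the proper resolution legitimate. A chain homotopy equivalence of $\bC G$-complexes induces a chain homotopy equivalence of the $\Csr(G)$-complexes. By the hypothesis, the Laplacian on the free-resolution complex has a spectral gap only in degree $n$, with kernel projection $p_n$, and all other kernels vanish. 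So the reduced cohomology is concentrated in degree $n$, and by Proposition \ref{prop: spectral gap tool} it is also unreduced there and in the neighbouring degrees. I will transfer the spectral gaps to $\Delta^X_*$ through the homotopy equivalence; reducedness of cohomology is a homotopy invariant. Then each $\mathcal E_k$ splits orthogonally as $\ker\Delta_k^X\oplus \overline{\mathrm{im}\,d}\oplus\overline{\mathrm{im}\,d^*}$, and the two image summands are projective and closed. Cancelling them in the alternating sum gives
\[
\sum_k(-1)^k[\mathcal E_k]=\sum_k(-1)^k[\ker\Delta^X_k].
\]
The same identity holds for the free complex. The two kernel projections are then Murray--von Neumann equivalent via the homotopy equivalence: a chain homotopy equivalence induces an isomorphism of the cohomology modules, and these are the kernels. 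Hence $\sum_k(-1)^k[\ker\Delta_k^X]=(-1)^n[p_n]$, which proves \eqref{p1}.

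\textbf{Main obstacle.} The delicate point is Step 2. First, the spectral gap has to be transported between two different complexes. Second, one must check that the cohomology of the $\mathcal E_*$-complex gives genuinely finitely generated projective modules, which requires closed ranges. I would handle both using the standard fact that a bounded adjointable complex of Hilbert modules with closed-range differentials has projective cohomology. Closedness of range is preserved under chain homotopy equivalence, since the cohomology groups are isomorphic and being Hausdorff is invariant. Vanishing of all $p_k$ with $k\neq n$ is interpreted as the corresponding Laplacians being invertible, which supplies their gaps. Finally, \eqref{p2} follows by combining \eqref{p1} with Lemma \ref{lemma: Euler}.
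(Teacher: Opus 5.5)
Your proposal is correct and takes essentially the paper's route (Section~4 of \cite{Pooya-Ren-Wang}, cf.\ the proof of Theorem~\ref{theorem: one relator groups}): use the proper projective resolution $\bigoplus_\sigma \bC G\rho_\sigma$ from the simplicial chains of $X$, use the spectral gaps to get a Hodge-type direct-sum decomposition implemented by matrices over $\Csr(G)$ so the image summands cancel in $\mathrm K_0$, and combine with Lemma~\ref{lemma: Euler}. Your chain-homotopy comparison with a free resolution is a harmless extra layer, since the generalised Definition~\ref{def: higher kazhdan gen} lets the paper read $p_n$ off the projective resolution directly; but drop the remark that the same Euler-characteristic identity holds for the free complex, because for groups with torsion a finite-type free resolution has infinite length, and you only need the Murray--von Neumann comparison of the kernels.
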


\begin{remark} \cite[Remark 4.8]{Pooya-Ren-Wang}
When a group \( G \) of type $FP_{n+1}$ admits more than one non-zero higher Kazhdan projection, then 
the combinatorial Euler class $[\mathrm{Eul}^{\mathrm{cmb}}_X]$ associated with $X$ is the preimage of the alternating sum of higher Kazhdan projections for $G$ under the assembly map $\mu_G$, i.e., 
    \begin{equation*}
        \mu^G([\mathrm{Eul}^{\mathrm{cmb}}_X])
        =
        \sum_{i=1}^{n}(-1)^i[p_i].
    \end{equation*}
Indeed, the proof follows the same steps as that of Theorem \ref{main thm1}. As our current interest is to describe explicitly the K-theory classes of higher Kazhdan projections and identify their preimages under the Baum--Connes assembly map, we state our main theorem under the assumption that there is only one non-zero higher Kazhdan projection. 
\end{remark} 

\section{Applications to virtually free groups}
\label{sec:4}
The results in this section are recalled from \cite[Section 6]{Pooya-Ren-Wang}.
In this section, we apply Theorem \ref{main thm1} to the class of non-amenable finitely generated virtually free groups. These groups have exactly one non-zero higher Kazhdan projection in degree one, as shown in the following lemma \ref{lem:existence}.
Moreover, as we will see, the standard tree on which they act provides the required $G$-simplicial complex for $\underline E G$, so we obtain a concrete description of the $K$-theory class of their higher Kazhdan projection and hence computation of delocalised $\ell^2$-Betti numbers.



    \begin{lemma} [{\normalfont \cite[Lemma 4.1]{Pooya-Wang}; see also \cite[Lemma 6.2]{Pooya-Ren-Wang} }]
     \label{lem:existence}
        Let $G$ be a non-amenable virtually free group. Then $p_1$ lies in a matrix algebra over $\Csr(G)$, and its $K$-class is a non-zero element of
        $\mathrm K_0(\Csr(G))$. All other projections $p_n$ vanish when $n\neq 1$.
	\end{lemma}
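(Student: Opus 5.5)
We prove the three assertions of Lemma \ref{lem:existence} in turn: the spectral gap in degree one, vanishing in the other degrees, and non-vanishing of $[p_1]$.

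\textbf{Setup.} A finitely generated virtually free group $G$ is the fundamental group of a finite graph of finite groups. Its Bass--Serre tree $T$ is therefore a one-dimensional cocompact model for $\underline EG$. Pick a torsion-free free subgroup $F\le G$ of finite index. Then $G$ is of type $FP_\infty$ over $\bC$, and since $\bC G$ is semisimple over finite stabilisers, $G$ has cohomological dimension at most one over $\bC$. We work with the projective resolution of $\bC$ given by the simplicial chains of $T$. Its modules are the permutation modules $\bC[G/G_v]$ and $\bC[G/G_e]$, and each is a direct summand of a free module via the averaging idempotents $\rho_\sigma$. Consequently the cochain complex computing $\mathrm H^*(G,\ell^2(G))$ is concentrated in degrees $0$ and $1$, and $\Delta_n=0$ on a zero module for $n\ge 2$. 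This gives $p_n=0$ for $n\ge 2$.

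\textbf{Degree zero.} Here $\ker\Delta_0$ consists of the $G$-invariant vectors in $\ell^2(G)$. These vanish because $G$ is infinite, being non-amenable. Since $\mathrm H^0$ and $\mathrm H^1$ are reduced (see below), $\Delta_0$ has a spectral gap, so $p_0=0$ in $\Csr(G)$.

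\textbf{Degree one: spectral gap.} By Proposition \ref{prop: spectral gap tool} it suffices that $\mathrm H^1(G,\ell^2(G))$ and $\mathrm H^2(G,\ell^2(G))$ are reduced. The second group vanishes by the dimension bound. The first is reduced by Lemma \ref{lemma: the first group cohomology}, since $G$ is non-amenable. Hence $p_1=\lim_{t\to\infty}e^{-t\Delta_1}$ lies in a matrix algebra over $\Csr(G)$, with the corner cut out by the averaging idempotents if we use the permutation-module resolution.

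\textbf{Degree one: $[p_1]\neq 0$.} We apply the von Neumann trace: $\tau([p_1])=\beta^{(2)}_1(G)$. This number equals $-\chi^{(2)}(G)$ because $\beta^{(2)}_0=0$ for infinite $G$ and the complex has length one. Restricting to $F$ gives
\[
\beta^{(2)}_1(G)=\beta^{(2)}_1(F)/[G:F]=(\mathrm{rk}(F)-1)/[G:F].
\]
This is positive because $F$ is non-amenable, hence free of rank at least $2$. A projection with nonzero trace has nonzero $K$-class.

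\textbf{Main obstacle.} The delicate point is ensuring that working with a projective rather than free resolution is legitimate in Definition \ref{def: higher kazhdan gen}. Concretely, one must identify the cohomology of the corner complex, built from $\rho_\sigma\ell^2(G)$, with $\mathrm H^*(G,\ell^2(G))$, and confirm that the spectral-gap criterion still applies there. I would handle this by passing to the free resolution obtained by adding contractible summands. That only changes Laplacians by invertible blocks, so the spectral gaps and the kernels are preserved.
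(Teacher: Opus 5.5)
Your proposal is correct and follows essentially the same route as the cited proof. The spectral gap in degree one comes from the Bader--Nowak criterion, with $\mathrm H^1(G,\ell^2(G))$ reduced by non-amenability and $\mathrm H^2(G,\ell^2(G))=0$ by cohomological dimension one over $\bC$. The remaining $p_n$ vanish: you read this off the length-one Bass--Serre resolution, which is equivalent to $\beta^{(2)}_n(G)=0$ for $n\neq 1$ together with faithfulness of the trace. Finally $[p_1]\neq 0$ because $\tau([p_1])=\beta^{(2)}_1(G)=(\mathrm{rk}(F)-1)/[G:F]>0$.

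The free-versus-projective issue you flag is harmless, since the definition of higher Kazhdan projections used here already allows corners of matrix algebras over $\Csr(G)$.
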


\begin{corollary} \cite[Corollary 6.3]{Pooya-Ren-Wang}
\label{cor: tree}
    Let $G$ be a non-amenable finitely generated virtually free group, and
    let $X$ be its Bass--Serre tree. Then the $\mathrm{K}$-theory class $[p_1]$ can be described in terms of the projections associated with the vertex and edge groups of the fundamental domain of the tree $X$:
    \begin{equation*}
       -[p_1]
       =\sum_{v\in G\backslash\operatorname{Vert}(X)}[\rho_v]
        -\sum_{e\in G\backslash\operatorname{Edge}(X)}[\rho_e]
        \in \mathrm K_0(\Csr(G)).
    \end{equation*}
\end{corollary}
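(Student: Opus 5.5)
The plan is to apply Theorem~\ref{main thm1} with $n=1$ and with the Bass--Serre tree $X$ as the $G$-finite model for $\underline EG$, then read off the right-hand side of \eqref{p1} simplex by simplex.

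\emph{Step 1: $X$ is a cocompact simplicial model for $\underline EG$.} Since $G$ is finitely generated and virtually free, it is the fundamental group of a finite graph of finite groups (Karrass--Pietrowski--Solitar, Stallings). Its Bass--Serre tree $X$ has finitely many $G$-orbits of vertices and edges, and all vertex and edge stabilisers are finite. For every finite subgroup $H\le G$, the fixed set $X^H$ is nonempty, since a finite group acting on a tree fixes a point. After passing to the barycentric subdivision, which makes the action free of inversions, $X^H$ is a subtree and hence contractible. So $X$ is a cocompact simplicial model for $\underline EG$, and Lemma~\ref{lemma: Euler} applies to it. The subdivision changes the orbit set of simplices, but this is harmless: each subdivided edge contributes one new vertex class and one extra edge class with the same stabiliser, and these cancel in the alternating sum. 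Alternatively, one may assume the splitting has no inversions from the outset.

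\emph{Step 2: verify the single-projection hypothesis.} By Lemma~\ref{lem:existence}, since $G$ is non-amenable and virtually free, $p_1$ exists in a matrix algebra over $\Csr(G)$ and all $p_n$ with $n\neq 1$ vanish. So $p_1$ is the only non-vanishing higher Kazhdan projection, and Theorem~\ref{main thm1} applies with $n=1$. The required $FP_{n+1}$-type finiteness holds because $G$ is virtually free and finitely generated, hence of type $FP_\infty$ over $\mathbb C$.

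\emph{Step 3: read off the formula.} Equation~\eqref{p1} gives
\[
-[p_1]=\sum_{\sigma\in G\backslash SX}(-1)^{|\sigma|}[\rho_\sigma].
\]
The simplices of the one-dimensional complex $X$ are its vertices ($|\sigma|=0$) and its edges ($|\sigma|=1$). Splitting the sum accordingly yields the stated expression. Here $\rho_v$ and $\rho_e$ are the averaging projections over the finite stabilisers $G_v$ and $G_e$. Their classes depend only on the orbit, because conjugate averaging projections are unitarily equivalent in $\Csr(G)$.

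The main obstacle is Step~1: identifying the Bass--Serre tree as a genuine $G$-finite simplicial model for $\underline EG$, in the form Theorem~\ref{main thm1} requires. This needs the fixed-point property of finite groups acting on trees and some care with inversions and orientations. The rest is a direct specialisation of Theorem~\ref{main thm1}.
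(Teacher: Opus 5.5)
Your proposal is correct and follows the same route as the paper. Lemma~\ref{lem:existence} makes $p_1$ the only non-vanishing higher Kazhdan projection, the Bass--Serre tree serves as the $G$-finite model for $\underline{E}G$, and Theorem~\ref{main thm1} with $n=1$ then splits into the vertex and edge sums. The detour through barycentric subdivision is unnecessary, since $G$ already acts on the Bass--Serre tree of a graph of groups without inversions.
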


Let $\mathcal F_G$ be the additive subgroup of $\mathbb Q$ generated by the inverses of the orders of finite subgroups of $G$, namely,
\[
\mathcal F_G:=\left\langle\left\{\frac{1}{|F|}\;\middle|\;F\leq G\text{ finite}\right\}\right\rangle\leq\mathbb Q.
\]

\begin{corollary} \cite[Corollary 6.8]{Pooya-Ren-Wang} \label{cor: nonzero del betti}
    Let $G$ be a non-amenable finitely generated virtually free group acting properly on a tree $X$, and let 
    $g\in F$ be an element in a finite subgroup $F \leq G $ that fixes vertices, but no edges of $X$.
    Then the first delocalised $\ell^2$-Betti number of $G$ is non-zero and satisfies
    \[
       \beta^{(2)}_{1, \langle{g}\rangle} (G) \in 
        \mathcal {F}_G \subseteq \mathbb{Q}.
       \]
\end{corollary}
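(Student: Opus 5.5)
Evaluating $\tau_{\langle g\rangle}$ on the formula of Corollary \ref{cor: tree} reduces the statement to a trace calculation on averaging projections. Write $X$ for a tree on which $G$ acts properly and cocompactly; it is a model for $\underline EG$. Lemma \ref{lem:existence} guarantees that $p_1$ is the only non-vanishing higher Kazhdan projection, so Corollary \ref{cor: tree} applies and gives
\[
\beta^{(2)}_{1,\langle g\rangle}(G)=\tau_{\langle g\rangle}([p_1])
=\sum_{e\in G\backslash \operatorname{Edge}(X)}\tau_{\langle g\rangle}(\rho_e)-\sum_{v\in G\backslash\operatorname{Vert}(X)}\tau_{\langle g\rangle}(\rho_v).
\]
The delocalised trace is defined on the group algebra, and $\rho_\sigma=\frac{1}{|G_\sigma|}\sum_{h\in G_\sigma}h$ lies in $\bC G$. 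Hence each term can be computed directly, with no smooth subalgebra needed:
\[
\tau_{\langle g\rangle}(\rho_\sigma)=\frac{|\langle g\rangle\cap G_\sigma|}{|G_\sigma|}.
\]
The same value is obtained for any representative of the orbit of $\sigma$.

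First, the edge terms vanish. If $\langle g\rangle\cap G_e\neq\emptyset$ for some edge $e$, then some conjugate $kgk^{-1}$ fixes $e$, so $g$ fixes the edge $k^{-1}e$. This contradicts the hypothesis that $g$ fixes no edges. Therefore
\[
\beta^{(2)}_{1,\langle g\rangle}(G)=-\sum_{v\in G\backslash\operatorname{Vert}(X)}\frac{|\langle g\rangle\cap G_v|}{|G_v|}.
\]
This is a finite sum of elements of $\mathcal F_G$, since each vertex group $G_v$ is finite by properness and each numerator is an integer. So the value lies in $\mathcal F_G$.

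Second, the value is non-zero. The sum has non-negative terms, so it suffices that one term is strictly positive. By hypothesis $g$ fixes some vertex $v_0$, so $g\in G_{v_0}$ and the orbit of $v_0$ contributes at least $1/|G_{v_0}|>0$. All terms carry the same sign, so no cancellation can occur, and the result is strictly negative and in particular non-zero.

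The main point needing care is justifying $\tau_{\langle g\rangle}([p_1])$ as an honest pairing. We need that $\tau_{\langle g\rangle}$ induces a well-defined map on $\mathrm K_0$, or at least on the classes involved. I would handle this by working with the explicit representative from Corollary \ref{cor: tree}, whose entries lie in $\bC G$. Virtually free groups are hyperbolic, so Puschnigg's smooth subalgebra \cite{puschnigg2010} supplies the extension of $\tau_{\langle g\rangle}$ to $\mathrm K_0(\Csr(G))$. On classes represented in $\bC G$, this extension agrees with the direct formula above.
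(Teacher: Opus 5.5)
Your proposal is correct and follows essentially the paper's route: pair $\tau_{\langle g\rangle}$ with the formula of Corollary~\ref{cor: tree}, using Puschnigg's smooth subalgebra to extend it to $\mathrm K_0(\Csr(G))$ since $G$ is hyperbolic, and use $\tau_{\langle g\rangle}(\rho_\sigma)=|\langle g\rangle\cap G_\sigma|/|G_\sigma|$; the edge terms vanish because no conjugate of $g$ fixes an edge, and the vertex terms give a strictly negative finite sum of integer multiples of the $1/|G_v|$, hence a non-zero element of $\mathcal F_G$. As a small sharpening, the fixed set of $g$ is a subtree with no edges, so $g$ fixes a unique vertex $v_0$ with $C_G(g)\leq G_{v_0}$; only the orbit of $v_0$ contributes, and in fact $\beta^{(2)}_{1,\langle g\rangle}(G)=-1/|C_G(g)|$.
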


We end this section with the following concrete examples of virtually free groups and provide the corresponding computations.

\begin{corollary} \cite[Corollary 6.5]{Pooya-Ren-Wang} \label{cor: amalgam}
    Let $G=F\ast_K L$ be an amalgamated free product of finite groups $F$ and $L$ over a common proper subgroup $K$ (where $K\subsetneqq F$ and $K\subsetneqq L$), with indices $[F:K]\geq 3$ or $[L:K]\geq 3$.
    Then the $K$-theory class $[p_1] \in \mathrm K_0(\Csr(G))$ is
    $$
    [p_1]=
    -[\rho_F]-[\rho_L]+[\rho_K],
    $$
    where $\rho_F$, $\rho_L$, and $\rho_K$ are the averaging projections associated with $F$, $L$, and $K$, respectively.
\end{corollary}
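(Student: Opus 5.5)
The plan is to apply Corollary \ref{cor: tree} to the Bass--Serre tree of the amalgam. For $G=F\ast_K L$, the Bass--Serre tree $X$ has vertex set $G/F\sqcup G/L$ and edge set $G/K$. The edge $gK$ joins $gF$ and $gL$. So $G\backslash X$ consists of exactly two vertex orbits, with stabilisers conjugate to $F$ and $L$, and one edge orbit, with stabiliser conjugate to $K$.

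Since $F$ and $L$ are finite, every stabiliser is finite and the action is proper and cocompact. Because a finite group acting on a tree fixes a vertex, every finite subgroup fixes a point. Hence $X$ is a cocompact simplicial model for $\underline EG$ (its fixed-point sets for finite subgroups are subtrees, so they are contractible). The group $G$ is finitely generated and virtually free, since it is the fundamental group of a finite graph of finite groups.

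The first substantive point is non-amenability, which is needed for Lemma \ref{lem:existence} and Corollary \ref{cor: tree}. The tree is locally finite. The vertex $F$ has valency $[F:K]$ and the vertex $L$ has valency $[L:K]$. If one index is at least $3$ and the other at least $2$, the tree has no leaves and is not a line. Then $G$ contains a non-abelian free subgroup, so $G$ is non-amenable. One way to see this is to find two hyperbolic elements with disjoint axes and apply ping-pong. Alternatively, the Euler characteristic $\chi(G)=1/|F|+1/|L|-1/|K|$ is negative, so a finite-index free subgroup has rank at least $2$. The negativity follows from the index assumptions:
\[
\frac{1}{|F|}+\frac{1}{|L|}-\frac{1}{|K|}=\frac{1}{|K|}\Bigl(\frac{1}{[F:K]}+\frac{1}{[L:K]}-1\Bigr)\le\frac{1}{|K|}\Bigl(\frac13+\frac12-1\Bigr)<0 .
\]
I expect this check to be the main (though mild) obstacle. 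The case $[F:K]=[L:K]=2$ is exactly the excluded infinite dihedral-type case, which is virtually $\mathbb Z$ and amenable.

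Having verified the hypotheses, Corollary \ref{cor: tree} gives $-[p_1]=[\rho_F]+[\rho_L]-[\rho_K]$, which is the claim. One should remark that $[\rho_{gHg^{-1}}]=[\rho_H]$ in $\mathrm K_0(\Csr(G))$, because conjugation by the unitary $\lambda_g$ takes one averaging projection to the other. So the choice of orbit representatives (fundamental domain) does not matter, and one may take the stabilisers to be $F$, $L$ and $K$ themselves.
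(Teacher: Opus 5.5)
Your proposal is correct and follows the intended route: apply Corollary~\ref{cor: tree} to the Bass--Serre tree of $F\ast_K L$, whose quotient is a single edge with vertex stabilisers $F$, $L$ and edge stabiliser $K$, using the index hypothesis only to guarantee non-amenability, which your Euler characteristic check $\chi(G)<0$ establishes cleanly. Your remark that conjugate stabilisers give unitarily equivalent averaging projections, and hence equal $K$-classes, correctly settles the independence from the choice of fundamental domain.
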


\begin{corollary} \cite[Corollary 6.6]{Pooya-Ren-Wang}
\label{cor: HNN}
    Let $F$ be a finite group with two proper subgroups $A,B\subsetneqq F$, and let $\psi\colon A\to B$ be an isomorphism.
    Let $G$ be the associated HNN extension with respect to $\psi$.
    Then the $K$-theory class $[p_1] \in \mathrm K_0(\Csr(G))$ is
    $$
    [p_1]=
    -[\rho_F]+[\rho_A]=-[\rho_F]+[\rho_B].
    $$
\end{corollary}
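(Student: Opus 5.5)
The plan is to apply Corollary \ref{cor: tree} to the Bass--Serre tree of the splitting $G=F\ast_A$. Corollary \ref{cor: tree} applies once we check its hypotheses: $G$ is finitely generated, virtually free (it is the fundamental group of a finite graph of finite groups) and non-amenable.

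For non-amenability, I would argue as follows. The Bass--Serre tree $X$ of the HNN extension has one orbit of vertices, with stabiliser conjugate to $F$, and one orbit of edges, with stabiliser conjugate to $A$. Each vertex has valence $[F:A]+[F:B]=2[F:A]$, since $|A|=|B|$. Because $A\subsetneqq F$ we have $[F:A]\ge 2$, so every vertex has valence at least $4$. Thus $X$ is a locally finite tree with all valences $\ge 3$ and a proper cocompact $G$-action. Then $G$ is quasi-isometric to $X$, which is non-amenable. More concretely, $G$ contains a non-abelian free subgroup, obtained by ping-pong with the stable letter $t$ and a conjugate $gtg^{-1}$ for suitable $g\in F\setminus A$. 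Lemma \ref{lem:existence} then gives the existence of $p_1$ and the vanishing of all other $p_n$.

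Next I would read off the formula. The quotient graph $G\backslash X$ has a single vertex $v$ with $\rho_v=\rho_F$, and a single edge $e$ with $\rho_e=\rho_A$. Corollary \ref{cor: tree} then yields $-[p_1]=[\rho_F]-[\rho_A]$, that is, $[p_1]=-[\rho_F]+[\rho_A]$. Here I use that the averaging projection of a conjugate subgroup $gHg^{-1}$ is unitarily equivalent to $\rho_H$ via $\lambda_g$, so the choice of representative of each orbit does not affect the $K$-class.

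The second equality, $[\rho_A]=[\rho_B]$, is the step requiring a small extra argument. In $G$ we have $tat^{-1}=\psi(a)$, so $B=tAt^{-1}$ and
\[
\lambda_t\rho_A\lambda_t^{*}=\frac{1}{|A|}\sum_{a\in A}\lambda_{tat^{-1}}=\rho_B .
\]
Hence $\rho_A$ and $\rho_B$ are unitarily equivalent in $\Csr(G)$ and define the same class in $\mathrm K_0$. Equivalently, the edge $e$ has its endpoint stabilisers $A$ and $B$ both conjugate to the edge stabiliser, so either can represent it.

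The main obstacle is the non-amenability check, specifically making sure the tree is not a line. This is exactly where the properness $A,B\subsetneqq F$ is needed, since $A=B=F$ would give the amenable group $F\rtimes\bZ$.
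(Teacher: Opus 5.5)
Your proposal is correct and follows the paper's route: apply Corollary \ref{cor: tree} to the Bass--Serre tree of the HNN extension. Its quotient has one vertex with stabiliser $F$ and one edge with stabiliser $A$, and since $B=tAt^{-1}$ you get $\rho_B=\lambda_t\rho_A\lambda_t^{*}$, hence $[\rho_A]=[\rho_B]$. Your explicit check of the hypotheses (every vertex has valence $2[F:A]\ge 4$, so $G$ is non-amenable) is a useful detail that the recalled statement leaves implicit.
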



\begin{example} \cite[Example 6.9]{Pooya-Ren-Wang}
\label{ex: sl2r}
Consider the group $G = \mathrm{SL}(2, \mathbb Z) = 
    \mathbb Z_4 *_ {\mathbb Z_2} \mathbb Z_6$, with $u$, $s$, and $t$ as generators for $\mathbb Z_2$, $\mathbb Z_4$, and $\mathbb Z_6$, respectively. Corollary \ref{cor: amalgam} implies
$$ [p_1] = \left[\frac{1+u}{2}\right]- \left[\frac{1+s+s^2+s^3}{4}\right] - \left[\frac{1+t+t^2+t^3+ t^4 +t^5}{6}\right].
$$
Then the delocalised $\ell^2$-Betti numbers for $G$ are
		\begin{equation*}
			\beta^{(2)}_{1, \langle{g}\rangle}(G) = 
			\begin{cases}
                1/12 & \text{if }g=1_G,\\
                1/12 & \text{if }g=u,\\
                -1/4 & \text{if }g\text{ is conjugate to }s\text{ or }s^3,\\
                -1/6 & \text{if }g\text{ is conjugate to }t,t^2,t^4,\text{ or }t^5,\\
                0 & \text{otherwise}.
			\end{cases}
		\end{equation*}
		and $\beta ^{(2)}_{k, \langle{g}\rangle}(G) = 0$ for $k \neq 1$ and $g\in G$.
\end{example}

\begin{example}\cite[Example 6.10]{Pooya-Ren-Wang}
    Consider the Klein four group $V=\langle a,b\,|\,a^2=b^2=e,ab=ba \rangle$
     with its two isomorphic proper subgroups $A=\{e,a\}$ and $B=\{e,b\}$. 
    Let $G$ be
     the associated HNN extension.
    Then by Corollary~\ref{cor: HNN}, $[p_1]$ is 
    $$
    [p_1]=\left[\frac{e+a}{2}\right]-\left[\frac{e+a+b+ab}{4}\right].
    $$
    Hence, the delocalised $\ell^2$-Betti numbers are
    \begin{equation*}
        \beta^{(2)}_{1, \langle{g}\rangle}(G) = 
        \begin{cases}
            1/4 &\qquad g=e\\
            -1/4 &\qquad g=ab \\
            0 &\qquad \text{otherwise}
        \end{cases}
    \end{equation*}
    and $\beta ^{(2)}_{k, \langle{g}\rangle}(G) = 0$ for $k \neq 1$ and $g\in G $.
\end{example}

\begin{example}\cite[Example 6.11]{Pooya-Ren-Wang}
    Consider the dihedral group $D_4=\langle r,s \,|\,r^4=s^2=e,srs=r^{-1}\rangle
    $
    with its two isomorphic proper subgroups $A=\{e,s \}$ and $B=\{e,sr \}$.
    Let $G
    $ be the associated HNN extension.
   Then by Corollary~\ref{cor: HNN}, the $K$-theory class $[p_1]$ 
   is
    $$
      [p_1]=\left[\frac{e+s}{2}\right]-\left[\frac{e+r+r^2+r^3+s+sr+sr^2+sr^{3}}{8}\right].
    $$
    Hence, the delocalised $\ell^2$-Betti numbers are
    \begin{equation*}
        \beta^{(2)}_{1, \langle{g}\rangle}(G)=
        \begin{cases}
            3/8 &\qquad g=e \\
            -1/4 &\qquad g\in \langle {r} \rangle=\langle {r^3} \rangle \\
            -1/8 &\qquad g\in \langle {r^2} \rangle \\
            1/4 & \qquad g\in \langle {s}\rangle \\
            -1/4 & \qquad g\in \langle {sr} \rangle \\
            0 &\qquad \text{otherwise}
        \end{cases}
    \end{equation*}
    and $\beta ^{(2)}_{k, \langle{g}\rangle}(G) = 0$ for $k \neq 1$ and $g\in G$.
\end{example}

\section{Applications to one-relator groups}\label{sec:5}
In this section, we apply Theorem \ref{main thm1} to a new class of examples given by finitely generated infinite one-relator groups. Unlike the virtually free groups considered above, these groups may have cohomological dimension two, extending the application of the theorem beyond the one-dimensional Bass--Serre setting.

Suppose that $G$ is a one-relator group with presentation
$$
G=\langle X\mid r\rangle,
$$
where $X$ is a set of generators with $|X|:=d$, and $r$ is an element of the free group $F$ on $X$. 
For the subsequent analysis, we restrict ourselves to the case of finitely generated infinite groups. Hence $1<d<\infty$.
This class includes important examples, such as the fundamental group of a closed orientable surface $\Sigma$ of genus $g$:
$$
\pi_1(\Sigma)=\langle x_1,x_2,\ldots,x_{2g-1},x_{2g}\mid [x_1,x_2]\cdots [x_{2g-1},x_{2g}]\rangle,
$$
where $[x_{2i-1},x_{2i}]$ denotes the commutator $x_{2i-1}x_{2i}{x_{2i-1}}^{-1}{x_{2i}}^{-1}$.

The class of one-relator groups has been studied extensively.
Recall that the relation $r$ is an element of the free group $F$.
Denote by $m:=\exp_F{(r)}$ the exponent of $r$ in $F$. It is defined as the supremum of the integers $k$ such that $r=a^k$ for some element $a\in F$, with $\infty$ allowed. Hence, $m\in\mathbb N\cup\{\infty\}$.
According to Theorem 6.22 and Corollary 6.15 of \cite{Chiswell04}, the Euler characteristic of $G$ with $1<d<\infty$ is given by
$$
\chi(G)=1-d+\frac{1}{m}.
$$
Therefore, $\chi(G)\leq 0$, and it vanishes if and only if $d=2$ and $m=1$.
Moreover, the $\ell^2$-Betti numbers for one-relator groups turn out to be concentrated in degree 1 as shown below.

\begin{lemma}[{\normalfont \cite{Warren-Linnell}}]
\label{lemma: l2 Betti number}
    Let $G$ be a one-relator group as above. Then the $\ell^2$-Betti numbers of $G$ are given by
    \begin{equation*}
        {\beta}^{(2)}_n(G)=
        \begin{cases}
            0 &\qquad \text{if } n=0,\\
            -\chi(G)=d-1-\frac{1}{m} &\qquad \text{if } n=1,\\
            0 &\qquad \text{if } n\geq 2.
        \end{cases}
    \end{equation*}
\end{lemma}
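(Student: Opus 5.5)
This lemma is quoted from Dicks and Linnell, so the plan is to reconstruct their argument from two ingredients. The first is a finite-dimensional model for $\underline EG$, or at least a finite resolution with finite stabilisers. The second is the vanishing of the $\ell^2$-Betti numbers in degrees $0$ and $\geq 2$. Once these are in place, the degree-one value follows from the $L^2$-Euler characteristic identity.

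\textbf{Step 1: $\beta^{(2)}_0$.} Since $1<d<\infty$, the group $G$ is infinite. The zeroth $\ell^2$-Betti number of a group is the von Neumann dimension of the $G$-invariant vectors in $\ell^2(G)$. For an infinite group there are no nonzero invariant vectors, so $\beta^{(2)}_0(G)=0$.

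\textbf{Step 2: degrees $\geq 2$.} Write $r=a^m$ with $a$ not a proper power. By the Lyndon identity theorem (torsion-free case, $m=1$) and its extension to the torsion case (Fischer--Karrass--Solitar, Chiswell, as cited in \cite{Chiswell04}), the presentation 2-complex gives a cocompact two-dimensional model for $\underline EG$. In the torsion-free case this complex is aspherical. In the torsion case one modifies it so that each relator cell is attached along $a^m$ and its stabiliser is the finite cyclic group $\langle a\rangle\cong\mathbb Z_m$. There is then one orbit of vertices with trivial stabiliser, $d$ free orbits of edges, and one orbit of $2$-cells with stabiliser $\mathbb Z_m$. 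Hence $\beta^{(2)}_n(G)=0$ for $n\geq 3$.

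The main obstacle is $\beta^{(2)}_2(G)=0$. Here I would use the Dicks--Linnell argument through the Atiyah conjecture for one-relator groups, together with the fact that $G$ is locally indicable or virtually so. The strategy is to show that the boundary map $\ell^2(G)\otimes_{\mathbb ZG}C_2\to\ell^2(G)\otimes_{\mathbb ZG}C_1$ is injective. In the torsion-free case, $C_2=\mathbb ZG$ and the map is right multiplication by the column of Fox derivatives $(\partial r/\partial x_i)$. Injectivity follows because this column has a nonzero entry and, by Linnell's theorem, $\mathbb ZG$ embeds in a division ring $\mathcal D(G)$ inside the algebra of affiliated operators. In the torsion case, $C_2=\mathbb ZG\otimes_{\mathbb Z\langle a\rangle}\mathbb Z$ and the argument is run on the corresponding corner, using the Fox derivative of $a$. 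If this proves too delicate, an alternative is to pass to a torsion-free finite-index subgroup (one-relator groups with torsion are virtually torsion-free) and use multiplicativity of $\ell^2$-Betti numbers under finite index. This reduces the claim to the torsion-free case.

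\textbf{Step 3: degree one.} With degrees $0$ and $2$ vanishing, the $L^2$-Euler characteristic formula for a cocompact proper complex gives
\[
-\beta^{(2)}_1(G)=\sum_{\sigma\in G\backslash SX}(-1)^{|\sigma|}\frac{1}{|G_\sigma|}=1-d+\frac1m=\chi(G).
\]
The middle expression is computed from the cell counts and stabilisers in Step 2, and the last equality is the formula for $\chi(G)$ quoted above from \cite{Chiswell04}. This gives $\beta^{(2)}_1(G)=d-1-\frac1m$.
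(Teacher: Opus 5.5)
The paper does not prove this lemma; it quotes Dicks--Linnell, so your reconstruction has to stand on its own. Steps 1 and 3 are fine, and so is the vanishing in degrees $\geq 3$. Step 3 is exactly the count $1-d+\frac1m$ from the resolution in Lemma~\ref{lemma: proj resolution}.

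The gap is the key claim $\beta^{(2)}_2(G)=0$ when $1<m<\infty$, which is not established. Your fallback does not work, because a torsion-free subgroup $H\leq G$ of index $N$ is not a one-relator group. Its $2$-chains are $\bZ H^{N/m}$, since $C$ acts freely on $H\backslash G$. Its second boundary is therefore an $(N/m)\times dN$ matrix over $\bZ H$. For a matrix, a non-vanishing entry together with the absence of zero divisors no longer implies injectivity. Torsion-free groups with $2$-dimensional classifying spaces can indeed have $\beta_2^{(2)}\neq 0$, for example $F_2\times F_2$. Multiplicativity only transports the problem, since $\beta_2^{(2)}(H)=N\beta_2^{(2)}(G)$. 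The ``corner'' version is not carried out either, and the torsion-free mechanism does not transfer to it. For $m>1$ the algebra $\bC G$ has zero divisors (e.g.\ $e(1-e)=0$), so $\delta_1(e)\neq 0$ alone does not make $\partial_2\colon\ell^2(G)e\to\ell^2(G)^d$ injective.

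What closes the gap is an integrality statement for $G$ itself: the strong Atiyah conjecture, i.e.\ $\dim_{\mathcal N(G)}\ker A\in\mathcal F_G$ for matrices $A$ over $\bC G$. Here $\mathcal F_G=\frac1m\bZ$, because every finite subgroup of $G$ is conjugate into $C$. Put $w=\delta_1(e)\in\bC G^d$ and let $A\colon\ell^2(G)\to\ell^2(G)^d$ be $f\mapsto fw=\partial_2(fe)$. Then $\ker A=\ell^2(G)(1-e)\oplus\ker\partial_2$, so
\[
\beta_2^{(2)}(G)=\dim_{\mathcal N(G)}\ker A-\Bigl(1-\frac1m\Bigr)\in\frac1m\bZ\cap\Bigl[0,\frac1m\Bigr].
\]
Suppose the value were $\frac1m=\dim_{\mathcal N(G)}\ell^2(G)e$. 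Faithfulness of the von Neumann dimension would force $\ker\partial_2=\ell^2(G)e$, hence $w=\partial_2(e)=0$. This contradicts the injectivity of $\delta_1$ in Lemma~\ref{lemma: proj resolution}. So $\beta_2^{(2)}(G)=0$, and the same argument with $m=1$ covers the torsion-free case.

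The Atiyah input is available for one-relator groups. One route is Wise's theorem that one-relator groups with torsion are virtually compact special, combined with Schreve's theorem; another is the work of Jaikin-Zapirain and L\'opez-\'Alvarez. It is not Linnell's theorem for his class $\mathcal C$. You cite that theorem in the torsion-free case, but the class does not obviously contain all torsion-free one-relator groups, so that citation should be replaced as well.

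Finally, $m=\infty$ means $r=1$ in $F$, so $G$ is free. This case should be treated separately, since ``$r=a^m$ with $a$ not a proper power'' does not cover it.
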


Given the faithfulness of the von Neumann dimension, the kernels of the Laplace operators all vanish except in degree 1. 
Therefore, we only need to consider the higher Kazhdan projections in degree 1 for one-relator groups.
However, despite the simplicity of the $\ell^2$-Betti numbers, the existence of the first higher Kazhdan projection for a one-relator group is a more subtle question, since no general results are known concerning the existence of a spectral gap at 0 for the associated first Laplace operator.

Once we know that the first spectral gap exists for a one-relator group, we can apply our main result, Theorem \ref{main thm1}, to obtain an expression for the $K$-theory class of its first higher Kazhdan projection, since the higher Kazhdan projections in all other degrees vanish.
In this case, we use the projective resolution in Lemma \ref{lemma: proj resolution} directly, rather than a concrete finite CW-complex model for $\underline{E}G$.
Recall that $m:=\exp_F{(r)}$. 
When $m<\infty$, one obtains $r=q^m$ for some $q\in F$ by definition.
Let $c$ denote the image of $q$ in $G$, and $C$ denote the finite cyclic group generated by $c$.
Let $e$ be the averaging projection of $C$.
When $m=\infty$, we set $e=0$.
Hence, $e$ is always a projection in $\bC G$.

\begin{lemma}\cite[Lemma 6.21 and proof of Theorem 6.22]{Chiswell04}
\label{lemma: proj resolution}
    Let $P=\bZ[G/C]$ when $m<\infty$, and let $P=0$ when $m=\infty$. Then the following is an exact sequence of left $\bZ G$-modules:
    \begin{equation*}
        0 \to P \stackrel{\delta_1}{\longrightarrow} \bigoplus_{X}\bZ G \stackrel{\delta_0}{\longrightarrow}\bZ G \stackrel{\epsilon}{\longrightarrow} \bZ \longrightarrow 0.
    \end{equation*}
    This yields a projective resolution of the trivial $\bC G$-module $\bC$:
    \begin{equation}
    \label{proj resolution}
		0 \to \bC Ge \stackrel{\delta_1}{\longrightarrow}  \bigoplus_{X}\bC G \stackrel{\delta_0}{\longrightarrow}\bC G \stackrel{\epsilon}{\longrightarrow} \bC \longrightarrow 0.
	\end{equation}
    Here, for each $x\in X$, $\delta_0(x)$ is the image of $x-1$ in $\bZ G$, and $\delta_1$ is the left Fox derivative.
\end{lemma}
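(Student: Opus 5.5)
The plan is to build the sequence from the presentation complex of $G=\langle X\mid r\rangle$. Then identify the relation module using Lyndon's identity theorem. Finally pass from $\bZ$ to $\bC$ by flatness.

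\emph{The case $m=\infty$.} In a free group only the trivial element is an arbitrarily high power, so $m=\infty$ forces $r=1$. Then $G=F$ is free on $X$. The sequence $0\to\bigoplus_X\bZ G\to\bZ G\to\bZ\to 0$ is the standard free resolution coming from the Cayley tree of $F$. Its exactness is classical: the augmentation ideal of a free group is free on the elements $x-1$. So assume $m<\infty$ from now on, with $r=q^m$ and $q$ not a proper power.

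\emph{Exactness over $\bZ$ in degrees $0$ and $1$.} Surjectivity of $\epsilon$ is clear. Exactness at $\bZ G$ holds because the augmentation ideal is generated by the elements $x-1$, $x\in X$, since $X$ generates $G$.

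At $\bigoplus_X\bZ G$ I use the standard identification, via Fox calculus, of $\ker\delta_0$ with the relation module $N^{\mathrm{ab}}=N/[N,N]$, where $N=\langle\langle r\rangle\rangle\trianglelefteq F$. The element $nN'$ is sent to the vector $\bigl(\overline{\partial n/\partial x}\bigr)_{x\in X}$. Injectivity of this map is the Magnus embedding. Hence $\ker\delta_0$ is generated as a $\bZ G$-module by $\bigl(\overline{\partial r/\partial x}\bigr)_x$.

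\emph{The map $\delta_1$ and its injectivity.} Define
\[
\delta_1(gC)=g\cdot\bigl(\overline{\partial r/\partial x}\bigr)_x .
\]
This is well defined on $\bZ[G/C]$ because the Fox derivative satisfies
\[
\partial r/\partial x=(1+q+\cdots+q^{m-1})\,\partial q/\partial x .
\]
Since $c^m=1$ in $G$, we have $c(1+c+\cdots+c^{m-1})=1+c+\cdots+c^{m-1}$, so left multiplication by $c$ fixes the image of $r$. By the previous step, $\delta_1$ surjects onto $\ker\delta_0$.

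Injectivity is the key point, and I expect it to be the main obstacle. It is exactly Lyndon's identity theorem: the stabiliser of $rN'$ in $N^{\mathrm{ab}}$ is precisely $C$, and $N^{\mathrm{ab}}\cong\bZ[G/C]$. I will cite this result (as in Chiswell, Lemma 6.21) rather than reprove it. I will also invoke the Karrass--Magnus--Solitar theorem that $c$ has order exactly $m$ in $G$, so $|C|=m$.

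\emph{Passage to $\bC$.} Since $\bC$ is flat over $\bZ$, applying $\bC\otimes_{\bZ}-$ preserves exactness. This gives an exact sequence
\[
0\to\bC[G/C]\to\bigoplus_X\bC G\to\bC G\to\bC\to 0 .
\]
Because $|C|=m$ is invertible in $\bC$, the element $e=\frac1m\sum_{i=0}^{m-1}c^i$ is an idempotent with $ce=e$. The map $gC\mapsto ge$ is then a well-defined $\bC G$-isomorphism $\bC[G/C]\to\bC Ge$. Its inverse is $\sum_g a_g g\,e\mapsto\sum_g a_g\,gC$; this is well defined since $\bC Ge$ has basis $\{g_ie\}$ indexed by coset representatives $g_i$ of $G/C$.

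Finally, $\bC G=\bC Ge\oplus\bC G(1-e)$, so $\bC Ge$ is projective. Together with the free modules $\bigoplus_X\bC G$ and $\bC G$, this shows that \eqref{proj resolution} is a projective resolution of $\bC$.
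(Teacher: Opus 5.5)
Your proposal is correct and reconstructs exactly the argument behind the cited source (Chiswell, Lemma 6.21 and the proof of Theorem 6.22): you identify $\ker\delta_0$ with the relation module via Fox calculus, use Lyndon's identity theorem $N^{\mathrm{ab}}\cong\bZ[G/C]$ for injectivity of $\delta_1$, and pass to $\bC$ by flatness with $\bC[G/C]\cong\bC Ge$ a direct summand of $\bC G$. The paper gives no proof beyond this citation, so your write-up is a faithful filling-in of the same route.
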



\begin{theorem}
\label{theorem: one relator groups}
    Let $G=\langle X\mid r\rangle$ be a one-relator group.
    Assume that the first spectral gap for $G$ exists.
    Then the $K$-theory class of the first higher Kazhdan projection for $G$ is given by
    $$
    [p_1]=(d-1)[1]-[e]\in \mathrm K_0(\Csr(G)),
    $$
    and $p_1$ is the only non-vanishing higher Kazhdan projection.
    Here $d=|X|\in\mathbb N$ with $d>1$ is the number of generators, and $e$ is the averaging projection associated with the relation $r$.
\end{theorem}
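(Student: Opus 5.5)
Since the resolution \eqref{proj resolution} of Lemma~\ref{lemma: proj resolution} has length two, I will run the argument of Theorem~\ref{main thm1} on this projective resolution directly, rather than on a $G$-finite simplicial model of $\underline EG$. Tensoring with $\ell^2(G)$ (equivalently, completing in $\Csr(G)$) gives a finite Hilbert $\Csr(G)$-module cochain complex
\[
0\to \Csr(G) \xrightarrow{\delta_0^*} \bigoplus_X \Csr(G) \xrightarrow{\delta_1^*} \Csr(G)e \to 0,
\]
whose modules are the projective modules $\Csr(G)$, $\Csr(G)^d$ and $\Csr(G)e$. Their $K$-classes are $[1]$, $d[1]$ and $[e]$. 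Here $e=0$ when $m=\infty$, and otherwise $e$ is the averaging projection of $C$ in $\bC G\subseteq\Csr(G)$. The Laplacians are $\Delta_0=\delta_0\delta_0^*$, $\Delta_1=\delta_0^*\delta_0+\delta_1\delta_1^*$ and $\Delta_2=\delta_1^*\delta_1$, all in matrix algebras over $\Csr(G)$ (on $\Csr(G)e$, the corner $e\,\Csr(G)\,e$).

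\emph{Step 1 (vanishing outside degree one).} Since $G$ is infinite, $\ker\lambda(\Delta_0)=\ell^2(G)^G=0$. By Lemma~\ref{lemma: l2 Betti number}, $\beta_2^{(2)}(G)=0$, and faithfulness of the von Neumann trace then gives $\ker\lambda(\Delta_2)=0$. The assumed first spectral gap, via Proposition~\ref{prop: spectral gap tool}, gives reducedness of $\mathrm H^1$ and $\mathrm H^2$. Moreover $\mathrm H^1(G,\ell^2(G))$ reduced forces $G$ non-amenable by Lemma~\ref{lemma: the first group cohomology}, and non-amenability gives reducedness of $\mathrm H^0$ and a spectral gap for $\Delta_0$. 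Since $\mathrm H^2$ is reduced and has zero reduced part, it vanishes, so $\delta_1^*$ has closed range and is surjective. Hence $\Delta_2$ is invertible, and $\Delta_0$ is invertible as well. Therefore $p_0=p_2=0$, $p_1$ exists in $M_d(\Csr(G))$, and it is the only non-vanishing higher Kazhdan projection.

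\emph{Step 2 (Euler characteristic in $K$-theory).} I will use the standard Hodge argument from the proof of Theorem~\ref{main thm1} (Section~4 of \cite{Pooya-Ren-Wang}). When the Laplacians have spectral gaps, each cochain module splits orthogonally as $\ker\Delta_k\oplus\overline{\mathrm{im}\,\delta^*}\oplus\overline{\mathrm{im}\,\delta}$, and the gaps make all these summands complemented, closed, projective submodules. The maps $\delta^*$ restrict to isomorphisms between consecutive exact pieces, because the gap makes them bounded below on the orthogonal complement of their kernels. Telescoping then gives
\[
\sum_k(-1)^k[C^k]=\sum_k(-1)^k[p_k]\in \mathrm K_0(\Csr(G)).
\]
This yields $[1]-d[1]+[e]=-[p_1]$, that is, $[p_1]=(d-1)[1]-[e]$.

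The main obstacle is making Step 2 rigorous in the presence of the non-free module $\Csr(G)e$: it must be treated as a corner and its Laplacian must live in $e\,M(\Csr(G))\,e$. One also has to check that the Hilbert-module isomorphisms $\overline{\mathrm{im}\,\delta_0^*}\cong\Csr(G)$ and $(\ker\delta_1^*)^\perp\cong\Csr(G)e$ hold at the $C^*$-level, not just after applying $\lambda$. To handle this, I will obtain the splittings from the polar decomposition of $\delta^*$, which lies in the $C^*$-algebra thanks to the gap via continuous functional calculus, and read off Murray--von Neumann equivalences of projections. A sanity check on the result is that applying $\tau$ gives $d-1-1/m$, in agreement with Lemma~\ref{lemma: l2 Betti number}.
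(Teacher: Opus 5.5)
Your proposal is correct and follows essentially the same route as the paper: the paper also works directly with the length-two cochain complex obtained from the resolution in Lemma~\ref{lemma: proj resolution}. It uses the Hodge-type splitting (Lemma~4.5 and the proof of Proposition~4.6 in \cite{Pooya-Ren-Wang}) to get $\ell^2(G)\oplus\ell^2(G)e\oplus\ker\Delta_1\cong\bigoplus_X\ell^2(G)$ implemented by matrices over $\Csr(G)$, and reads off $[1]+[e]+[p_1]=d[1]$. Your polar-decomposition argument is an explicit version of that cited step, and your Step~1 spells out the invertibility of $\Delta_0$ and $\Delta_2$, where the paper only appeals to the vanishing of the $\ell^2$-Betti numbers.
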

\begin{proof}
    Applying the functor $\mathrm{Hom}(\cdot, \ell^2(G))$ to the (unaugmented) projective resolution (\ref{proj resolution}) in Lemma \ref{lemma: proj resolution}, we obtain a finite-length cochain complex of Hilbert spaces:
    \begin{equation*}
		0 \to \ell^2(G) \longrightarrow  \bigoplus_{X}\ell^2(G) \longrightarrow \ell^2(G)e \longrightarrow 0.
	\end{equation*}
    
    Next, applying Lemma 4.5 in \cite{Pooya-Ren-Wang} to the above cochain complex, we obtain the following direct-sum isomorphism of vector spaces:
    $$
    \ell^2(G) \oplus (\ell^2(G)e) \oplus \ker\Delta_1 \cong \bigoplus_{X}\ell^2(G).
    $$
    Recall that the kernels of the Laplace operators in degrees other than 1 all vanish by Lemma \ref{lemma: l2 Betti number}.
    The proof of Proposition 4.6 in \cite{Pooya-Ren-Wang} shows that this isomorphism and its inverse are implemented by matrices over $\Csr(G)$. Passing to the associated projections therefore gives
    \[
        [1]+[e]+[p_1]=d[1]
        \quad\text{in }\mathrm K_0(\Csr(G)),
    \]
    which yields the stated formula.
    This finishes the proof.
\end{proof}

\begin{corollary}
\label{corollary: l2 Betti numbers for one relator groups}
    Under the hypotheses of Theorem \ref{theorem: one relator groups}, if $G$ is also hyperbolic, then its delocalised $\ell^2$-Betti numbers are given by
    \begin{equation*}
        \beta^{(2)}_{1,\langle g\rangle}(G)
        =(d-1)\mathbf 1_{\{g=1_G\}}-\tau_{\langle g\rangle}(e).
    \end{equation*}
    If $m<\infty$, this becomes
    \begin{equation*}
        \beta^{(2)}_{1,\langle g\rangle}(G)
        =(d-1)\mathbf 1_{\{g=1_G\}}
        -\frac{|\langle g\rangle\cap C|}{m},
    \end{equation*}
    where $C=\langle c\rangle$ is the finite cyclic subgroup determined by the relator. If $m=\infty$, then $e=0$, so the value is $d-1$ for $g=1_G$ and zero otherwise. In all cases, $\beta^{(2)}_{k,\langle g\rangle}(G)=0$ for $k\neq 1$.
\end{corollary}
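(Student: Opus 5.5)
The plan is to apply the trace functionals to the $K$-theory identity of Theorem \ref{theorem: one relator groups}. Hyperbolicity is used only to make these functionals defined on $\mathrm K_0(\Csr(G))$. By Puschnigg \cite{puschnigg2010}, a hyperbolic group $G$ has a smooth subalgebra $\mathcal S\subseteq \Csr(G)$ to which every delocalised trace $\tau_{\langle g\rangle}$ extends continuously. Since $\mathcal S$ is closed under holomorphic functional calculus, $\mathrm K_0(\mathcal S)\cong \mathrm K_0(\Csr(G))$. Hence each $\tau_{\langle g\rangle}$ induces a well-defined additive map on $\mathrm K_0(\Csr(G))$, given on matrix projections by $\tau_{\langle g\rangle}\circ\mathrm{tr}$.

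Applying this map to $[p_1]=(d-1)[1]-[e]$ and using additivity gives
\[
\beta^{(2)}_{1,\langle g\rangle}(G)=(d-1)\,\tau_{\langle g\rangle}(1)-\tau_{\langle g\rangle}(e).
\]
Here $1=\delta_{1_G}\in\bC G$. By the defining formula $\tau_{\langle g\rangle}(f)=\sum_{h\in\langle g\rangle}f(h)$, we get $\tau_{\langle g\rangle}(1)=1$ exactly when $1_G\in\langle g\rangle$, that is, when $g=1_G$, and $0$ otherwise. This gives the first formula.

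If $m<\infty$, we need $e$ explicitly. The element $c$ has order exactly $m$ in $G$; this is the classical torsion theorem for one-relator groups (Karrass--Magnus--Solitar, see \cite{Chiswell04}), and it is implicit in $P=\bZ[G/C]$ in Lemma \ref{lemma: proj resolution}. Then
\[
e=\frac1m\sum_{k=0}^{m-1}c^k .
\]
So $\tau_{\langle g\rangle}(e)=\frac1m\,\#\{k: c^k\in\langle g\rangle\}=|\langle g\rangle\cap C|/m$, which is the second formula. If $m=\infty$, then $e=0$, and only the $(d-1)\mathbf 1_{\{g=1_G\}}$ term survives.

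Finally, consider degrees $k\neq1$. By Theorem \ref{theorem: one relator groups}, $p_k=0$ for $k\neq 1$, so its $K$-class is zero. Any trace of the zero class vanishes, hence $\beta^{(2)}_{k,\langle g\rangle}(G)=0$.

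The only potentially delicate point is that the pairing is well defined on $\mathrm K_0(\Csr(G))$ rather than just on $\bC G$. Puschnigg's theorem handles this. The remaining work is routine once we know that $C$ has order exactly $m$.
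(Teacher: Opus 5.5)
Your proposal is correct and follows the paper's proof. You apply the delocalised traces to the identity $[p_1]=(d-1)[1]-[e]$, with hyperbolicity entering only through Puschnigg's smooth subalgebra so that $\tau_{\langle g\rangle}$ is well defined on $\mathrm K_0(\Csr(G))$, and then evaluate on $1$ and $e$. The details you add that the paper leaves implicit are right, in particular that $c$ has order exactly $m$ by the Karrass--Magnus--Solitar theorem, which is what gives $\tau_{\langle g\rangle}(e)=|\langle g\rangle\cap C|/m$.
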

\begin{proof}
    The conclusion follows immediately by applying the delocalised traces to the $K$-class expression of $[p_1]$ in Theorem \ref{theorem: one relator groups}.
    The hyperbolicity assumption guarantees the extension of the delocalised traces.
\end{proof}

In the following, we present several examples of one-relator groups and discuss their first higher Kazhdan projections and delocalised $\ell^2$-Betti numbers when the corresponding first spectral gap exists.

\begin{example}
Let $G$ be the fundamental group of a closed orientable surface $\Sigma$ of genus $g$:
$$
G=\pi_1(\Sigma)=\langle x_1,x_2,\ldots,x_{2g-1},x_{2g}\mid [x_1,x_2]\cdots [x_{2g-1},x_{2g}]\rangle,
$$
where $[x_{2i-1},x_{2i}]$ denotes the commutator $x_{2i-1}x_{2i}{x_{2i-1}}^{-1}{x_{2i}}^{-1}$.
We require that $g\geq 2$ to guarantee that $G$ is non-amenable. Since $\chi(G)=2-2g$ is non-zero in this case, we have $\alpha_1(\widetilde{\Sigma})={\infty}^+$.
The surface relator is not a proper power, so $m=1$ in this case.
Then, by Example 2.70 in \cite{LueckL2inv}, we obtain $\alpha_2(\widetilde{\Sigma})={\infty}^+$, which implies the existence of the first spectral gap by Proposition \ref{prop: spectral gap and NS inv}.
Since $G$ is torsion-free, Theorem \ref{theorem: one relator groups} shows that the $K$-class $[p_1]$ is a multiple of the identity:
\begin{equation*}
    [p_1]=(2g-2)[1]\in \mathrm K_0(\Csr(G)).
\end{equation*}
Moreover, since the surface group $G$ is hyperbolic when $g \geq 2$ (see Section 3.3 in \cite{Gromov}), we can use Corollary \ref{corollary: l2 Betti numbers for one relator groups} to compute the delocalised $\ell^2$-Betti numbers for $G$ as follows:
\begin{equation*}
    \beta^{(2)}_{1,\langle e\rangle}(G)=2g-2, \qquad
    \beta^{(2)}_{k,\langle s\rangle}(G)=0
    \quad\text{for all }(k,s)\neq(1,e).
\end{equation*}
Here $e$ denotes the group identity in $G$.
\end{example}

\begin{example}
    Consider the one-relator group $G=\bZ_n*\bZ=\langle a,b\mid a^n\rangle$ for some integer $n>1$.
    First notice that $\chi(G)=\frac{1}{n}-1$ is non-vanishing, hence $G$ is non-amenable, which implies that the first group cohomology $\mathrm H^1(G, \ell^2(G))$ is reduced by Lemma \ref{lemma: the first group cohomology}.
    We can verify the reducedness of the second group cohomology $\mathrm H^2(G, \ell^2(G))$ by direct computation and deduce that the first spectral gap exists by Proposition \ref{prop: spectral gap tool}.
    To prove that $\mathrm H^2(G, \ell^2(G))$ is reduced, it suffices to show that the image of the first codifferential with coefficients in $\ell^2(G)$ is closed.
    More precisely, by calculating the Fox derivative, the first codifferential is given by
    \begin{equation*}
        d^1: {\ell^2(G)}^{\oplus 2} \to \ell^2(G), \qquad
        d^1(f_1,f_2)=(1+a+\cdots+a^{n-1})f_1.
    \end{equation*}
    Define the averaging projection $e=\frac{1+a+\cdots+a^{n-1}}{n}$. Then $\operatorname{im}d^1=e\ell^2(G)$, which is closed because $e$ is a projection.
    Given the existence of the first spectral gap established above, Theorem \ref{theorem: one relator groups} gives
    $$
    [p_1]=[1]-[\frac{1+a+\cdots+a^{n-1}}{n}]\in \mathrm K_0(\Csr(G)).
    $$
    Moreover, since $\bZ_n$ and $\bZ$ are both hyperbolic groups, $G=\bZ_n*\bZ$ is also hyperbolic (see Section 3.2 of \cite{Gromov}).
    Applying Corollary \ref{corollary: l2 Betti numbers for one relator groups}, we obtain the delocalised $\ell^2$-Betti numbers
     \begin{equation*}
        \beta^{(2)}_{1,\langle g\rangle}(G)=
        \begin{cases}
            1-\frac{1}{n} & \text{if } g=1_G,\\
            -\frac{1}{n} & \text{if } g \text{ is conjugate to }a^j
                \text{ for some }1\leq j<n,\\
            0 & \text{otherwise},
        \end{cases}
    \end{equation*}
    and $\beta^{(2)}_{k,\langle g\rangle}(G)=0$ for $k\neq 1$ and $g\in G$.
\end{example}

We now provide an example of a non-amenable one-relator group for which the first spectral gap does not exist.
In this example, we use the Novikov--Shubin invariants to discuss the absence of the first spectral gap.
\begin{example}
    Consider the one-relator group $G=\bZ^2*\bZ=\langle a,b,c\mid [a,b]\rangle$.
    Then $EG$ admits a CW-complex model of finite type given by $\widetilde{T^2 \vee S^1}$.
    First notice that $\chi(G)=-1$ is non-zero, so $G$ is non-amenable, which implies $\alpha_1(\widetilde{T^2 \vee S^1})={\infty}^+$.
    For the second Novikov--Shubin invariant, the proof of Proposition 3.7 in \cite{Lott-Lueck} shows that $\alpha_2(\widetilde{T^2 \vee S^1})$ is the minimum of $\alpha_2(\widetilde{T^2})$ and $\alpha_2(\widetilde{S^1})$.
    Finally, from Example 2.59 in \cite{LueckL2inv}, one has 
    \begin{equation*}
        \alpha_2(\widetilde{T^2})=2,\qquad \alpha_2(\widetilde{S^1})={\infty}^+.
    \end{equation*}
    Therefore, $\alpha_2(\widetilde{T^2 \vee S^1})=2$. Hence, Proposition \ref{prop: spectral gap and NS inv} implies that the first spectral gap for $G$ does not exist, and neither does the first higher Kazhdan projection.
\end{example}

Finally, we end this paper with the following example of one-relator groups with exactly two generators, the Baumslag--Solitar groups.
\begin{example}
Consider the Baumslag--Solitar group $G=\mathrm{BS}(m,n)=\langle a,b\mid ba^m b^{-1}=a^n\rangle$ for nonzero integers $m,n$.
As the relation in this case is not a proper power, its exponent, in the sense defined above, is $1$.
Hence $\chi(G)$ vanishes, as do the $\ell^2$-Betti numbers in all degrees by Lemma \ref{lemma: l2 Betti number}.
By the faithfulness of the von Neumann dimension, all higher Kazhdan projections vanish in a matrix algebra over the von Neumann algebra $\mathcal{N}(G)$, and we do not consider the existence of spectral gaps when the corresponding kernel projection vanishes.
\end{example}

\bibliographystyle{amsplain}
\bibliography{main}

\end{document}